\documentclass[11pt]{amsart}

\title[Spectral Theory and Descriptive Combinatorics]{Spectral Theory and Descriptive Combinatorics for Non-Regular Borel pmp Graphs}

\author[C.~Higgins]{Cecelia Higgins}
\address{Department of Mathematics, Rutgers University, 110 Frelinghuysen Road, Piscataway, NJ 08854-8019, USA}
\email{ch799@math.rutgers.edu}

\author[P.~Spaas]{Pieter Spaas}
\email{pspaas@wlu.ca}

\author[S.~Torelli]{Sofia Torelli}
\address{Department of Mathematics, University of California, Berkeley, 970 Evans Hall, Berkeley, CA 94720-3840, USA}
\email{storelli@berkeley.edu}

\usepackage[margin=1in]{geometry}

\usepackage[T1]{fontenc}

\usepackage{amssymb,
enumitem,
mathrsfs,
mathtools, 
tikz,
upgreek,
verbatim,
hyphenat,
url,
todonotes,
thm-restate,
microtype}

\makeatletter
\renewcommand\thmt@autorefsetup{\@xa\def\csname\thmt@envname autorefname\@xa\endcsname\@xa{\thmt@thmname}}
\makeatother

\newcommand{\C}{\mathbb{C}}
\newcommand{\N}{\mathbb{N}}

\newcommand{\R}{\mathbb{R}}

\newcommand{\eps}{\varepsilon}
\newcommand{\G}{\mathcal{G}}
\newcommand{\calN}{\mathcal{N}}
\newcommand{\ap}{\text{ap}}
\newcommand{\empt}{\emptyset}

\newcommand{\abs}[1]{\left| #1 \right|}
\newcommand{\norm}[1]{\left\| #1 \right\|}

\newcommand{\ceil}[1]{\left\lceil #1 \right\rceil}
\newcommand{\ang}[1]{\left\langle #1 \right\rangle}
\newcommand{\st}[1]{\left\{ #1 \right\}}
\newcommand{\en}[1]{\st{1,\dots,#1}}
\newcommand{\gmu}[1]{\gamma_\mu( #1 )}

\newcommand{\out}{\mathrm{out}}
\newcommand{\ins}{\mathrm{in}}
\newcommand{\dmu}{\ d\mu}
\newcommand{\sub}{\subseteq}
\newcommand{\ind}{\mathbf 1}
\newcommand{\sgn}{\text{sgn}}

\newenvironment{enumerate-(i)}{\begin{enumerate}[label={\upshape (\roman*)}, leftmargin=2pc]}{\end{enumerate}}
\newenvironment{enumerate-(1)}{\begin{enumerate}[label={\upshape (\arabic*)}, leftmargin=2pc]}{\end{enumerate}}

\theoremstyle{plain}

\newtheorem{thm}{Theorem}[section]
\newtheorem{cor}[thm]{Corollary}
\newtheorem{prop}[thm]{Proposition}
\newtheorem{lem}[thm]{Lemma}

\newtheorem{claim}[thm]{Claim}

\theoremstyle{definition}

\theoremstyle{remark}

\usepackage[colorlinks=true, linkcolor=cyan, citecolor=cyan, hypertexnames=false]{hyperref}

\numberwithin{equation}{section}

\begin{document}

\begin{abstract}
We continue the systematic study initiated in \cite{HST2026} of the relationship between the spectral theory and descriptive combinatorics of bounded-degree Borel pmp graphs, particularly in the non-regular setting.
We first adapt a classical result of Nikiforov \cite{Nikiforov2007} to give a new spectral lower bound on the approximate measurable chromatic number. For certain non-regular graphs, this bound improves on the approximate measurable version of Hoffman's classical bound established in \cite{HST2026}. 
Next, we strengthen a characterization in \cite{HST2026} of approximate measurable bipartiteness in terms of the spectrum of the adjacency operator by generalizing to the case of non-regular graphs.
Finally, we introduce the measurable domination number and adapt several classical results of Lu, Liu, and Tian \cite{LLT05} to give a bound on the spectral gap of the Laplacian operator in terms of the measurable domination number.
\end{abstract}

\maketitle

\section{Introduction}

In this paper, we study applications of spectral graph theory to descriptive combinatorics, particularly in the setting of non-regular graphs. This continues the line of exploration initiated in \cite{HST2026}. 

In descriptive combinatorics, a major goal is to develop tools for constructively solving graph theoretic problems such as coloring and matching on (certain) infinite graphs. Although many classical graph theory results can be solved on infinite graphs, the resulting solutions are often non-constructive, obtained by applying, for instance, the axiom of choice. Thus, in descriptive combinatorics, one typically imposes further definability or measurability constraints on the solutions. For modern surveys of the area, see, for instance, \cite{KechrisMarks2020, Pikhurko2021}.

In this paper, we study the descriptive combinatorics of a certain class of definable graphs. Specifically, given a standard probability space $(X,\mu)$, we are interested in bounded-degree Borel graphs that are probability-measure-preserving (or pmp) on $X$; see Section~\ref{sec: prelim} for definitions. This is a natural class of graphs that has been the subject of considerable attention in recent research; see, for instance, the survey \cite[Chapters~6--9 and~13]{KechrisMarks2020}, as well as the recent papers \cite{timar2019,thornton2022,bks2022,cmgt2025}.

Our techniques draw inspiration from classical spectral graph theory, in which matrices, such as the adjacency matrix and Laplacian matrix, are associated to finite graphs. Then, the eigenvalues of these matrices provide information about combinatorial properties of the corresponding graphs. For an introduction to spectral graph theory, we refer to the survey \cite{Spielman2025}.

Among the many successes of the classical theory are the various spectral bounds on the chromatic number. One particular lower bound due to Nikiforov \cite{Nikiforov2007}, which relates the chromatic number of a graph $G$ to the maximum eigenvalues of its adjacency and Laplacian matrices, denoted by $\mu_1(A)$ and $\mu_1(L)$, respectively, shows that
    \[\chi(G)\ge \ceil{
    1+\frac{\mu_1(A)}{\mu_1(L)-\mu_1(A)}
    }.\]
In the regular case, i.e., in the case that all vertices of $G$ have the same degree, this bound reduces to the well-known spectral bound of Hoffman \cite{Hoffman1970}. However, Nikiforov's bound is known to improve upon Hoffman's for certain non-regular graphs, while they are in general incomparable (see \cite{Nikiforov2007}). 

In Section \ref{sec: main theorem}, we generalize this result of Nikiforov to the setting of bounded-degree Borel pmp graphs. Specifically, let $\chi_\mu^\ap(\G)$ denote the approximate chromatic number of a graph $\G$, and let $M(T)$ denote the maximum spectral value of a bounded self-adjoint operator $T$ (see Section~\ref{sec: prelim} for precise definitions). Our first main result is the following.

\begin{restatable}[]{main}{maintheorem}\label{thm: approx m-nik}
    Let $\G$ be a bounded-degree Borel pmp graph on a standard probability space \((X,\mu)\) with adjacency operator $T_\G$ and Laplacian operator \(L_\G\). 
    Then
    \[\chi_\mu^{\ap}(\G)\ge \ceil{1+ \frac{M(T_\G)} {M(L_\G)-M(T_\G)}}.\]
\end{restatable}

Analogously to the classical setting, when $\G$ is regular, this bound reduces to the approximate measurable version of Hoffman's bound established in \cite[Theorem~6.8]{HST2026}. However, as mentioned earlier, there are non-regular graphs for which the approximate measurable version of Nikiforov's bound improves on that of Hoffman's bound.

Our proof of Theorem~\ref{thm: approx m-nik} follows in part from a more general result, Theorem~\ref{thm: meas Kol}, which relates the maximum spectral values of certain bounded self-adjoint operators. This theorem is inspired by a general result of Kolotilina \cite{Kolotilina2010} for matrices. Using Theorem~\ref{thm: meas Kol}, we are further able to state a spectral condition characterizing when the \emph{measurable} chromatic number $\chi_{\mu}(\G)$ is equal to the lower bound from Theorem~\ref{thm: approx m-nik}; see Corollary~\ref{cor: meas char}.

In Section~\ref{sec: bipartiteness}, we study approximate measurable bipartiteness. Concretely, the following is our next main result. 
\begin{restatable}[]{main}{bipartiteness}\label{thm: bipartite}
    Let $\G$ be an ergodic, bounded-degree Borel pmp graph on a standard probability space $(X, \mu)$, whose adjacency operator $T_\G$ has spectral gap. If \(-M(T_\G)\) belongs to the spectrum of $T_\G$, then $\G$ is approximately measurably bipartite.
\end{restatable}
This result is a generalization of \cite[Theorem~4.9]{HST2026} to the non-regular case. In the proof, we use the ergodicity of $\G$ to conclude that, up to scalar multiplication, the only eigenvector for $M(T_\G)$ is a strictly positive vector $h$. Then, taking a suitable approximate eigenvector $f$ for $-M(T_\G)$, we use the spectral gap assumption to conclude that $\vert f \vert$ is, modulo a small perturbation, a scalar multiple of $h$ and hence nowhere zero. Finally, we use the signs of $f$ to determine an approximate measurable bipartition of $\G$.

We remark that the converse of Theorem~\ref{thm: bipartite} is already known in full generality. That is, if a bounded-degree Borel pmp graph $\G$ is approximately measurably bipartite, then its spectrum is symmetric about $0$ \cite[Theorem~4.4]{HST2026}.

Finally, we consider the spectral gap of the Laplacian operator $L_\G$, which is given by
$$m_0(L_\G) = \inf_{f \in L^2_0(X), f \neq 0} \frac{\ang{L_\G f, f}}{\norm{f}^2}.$$
Note that when $\G$ is regular, one can equivalently define the spectral gap in terms of the adjacency operator; see, for instance, \cite{HST2026}. However, when $\G$ is non-regular, these operators can behave quite differently in terms of spectral gap. In many situations, the spectral gap of $L_\G$ is more natural, and it has already found applications to the descriptive combinatorics of $\G$ (see, for instance, \cite[Section~7]{HST2026}). It is therefore of interest to give quantitative bounds on this parameter, especially in the non-regular case.

This is the content of our last main result below, which establishes an upper bound on the Laplacian spectral gap in terms of the \textit{(measurable) domination number} of the graph. Recall that, for a finite graph $G$, a dominating set for $G$ is a set $Y \sub V(G)$ such that, for each vertex $x \in V(G)$, either $x \in Y$ or $x$ has a neighbor in $Y$. The domination number of $G$ is then the minimum cardinality of a dominating set. In Section~\ref{sec: domination}, we introduce the measurable domination number $\gamma_{\mu}(\G)$ as the infimum over the measures of (measurable) dominating sets. We then prove some relationships between $\gamma_{\mu}(\G)$ and the spectrum of $L_\G$. In particular, we provide the following upper bound on the size of the spectral gap of $L_\G$ in terms of $\gamma_{\mu}(\G)$. 

\begin{restatable}[]{main}{domination}\label{thm: domination}
    Let $\G$ be an ergodic Borel pmp graph on a standard probability space $(X, \mu)$ with maximum degree $\Delta$ and measurable domination number $\gamma_\mu$. Then
        $$m_0(L_\G) \leq \frac{\Delta(1 - 2\gamma_\mu) + \gamma_{\mu}}{\gamma_{\mu}(1 - \gamma_{\mu})}.$$
\end{restatable}

Our proof takes inspiration from the work of Lu, Liu, and Tian \cite{LLT05}, who prove a similar bound in the classical setting.

\subsection*{Acknowledgments}
S.T.\ was a participant in the 2026 DIMACS REU and was supported by NSF grants CCF-2447342 and DMS-2348819. The results of Sections~\ref{sec: main theorem} and~\ref{sec: bipartiteness} were obtained during the DIMACS program. P.S.\ was partially supported by the European Union via an ERC grant (QInteract, Grant No. 101078107). 

We thank Filippo Calderoni for his support throughout the REU and for the many helpful comments on early drafts of the paper.

\subsection*{A.I.\ disclosure}
No A.I.\ was used in the preparation of this document.

\section{Preliminaries}\label{sec: prelim}

\subsection{Graphs and colorings}
We use largely the same notations and conventions as in \cite{HST2026}, but we (re-)introduce the relevant ones here for the reader's convenience.

For the remainder of the paper, $(X,\mu)$ will denote a standard probability space unless specified otherwise. For notational convenience, given a subset $Y\sub X$, we will usually write $Y^c$ instead of $X\setminus Y$ when the ambient space is clear from context. 

A \emph{Borel graph} on $X$ is a graph $\G$ whose edge set $E(\G)$ is Borel in $X^2$. We write \(x\sim y\) when $x$ and $y$ are adjacent in $\G$; we also say $x$ is a \emph{neighbor} of $y$, and vice versa. In this paper, all graphs are undirected and contain no self-loops, i.e., $\sim$ is an irreflexive and symmetric relation.
The \emph{degree} of a vertex \(x\in X\), denoted by \(\deg(x)\), is the cardinality of its \emph{neighborhood} \(\calN(x) =\st{y\in X : y\sim x }\).
For a set $A\sub X$, we denote by \(\calN (A)\) the \emph{neighborhood} of $A$, i.e.,
\[\calN(A) = \bigcup_{x\in A}\calN(x)=\st{y \in X : y\text{ has a neighbor in }A}.\]
For any $Y\sub X$, let $\deg_Y(v)$ denote the degree of $v$ in $Y$, i.e., $\deg_Y(v)=\abs{\calN (v) \cap Y}$.

Since we work throughout in the measurable setting, we always implicitly discard sets of measure zero.
For instance, we say that $\G$ has \emph{bounded degree} if there is \(d\in \mathbb N\) such that \(\deg(x)\le d\) for all $x$ in a conull set $A\sub X$. When $\G$ has bounded degree, we define the \emph{maximum degree} $\Delta(\G)$ of $\G$ to be the maximum $d$ for which the set \(\{x\in X:\deg(x)=d\}\) has positive measure.
Similarly, $\G$ is $d$\emph{-regular} if there is \(d\in \mathbb N\) such that \(\deg(x) = d\) for all $x$ in a conull set. Finally, we note that, given a subset $A\sub X$, we may consider the \emph{subgraph $\G[A]$ of $\G$ induced by $A$}, i.e., the graph on $A$ whose edge relation $E(\G[A])$ is defined by
\[
E(\G[A]) = E(\G) \cap A^2.
\]

The set \(A \sub X\) is ($\G$-)\emph{independent} if \(x\not\sim y\) whenever \(x,y\in A\) are distinct. Let \(k\in \mathbb N\); a map \(c\colon X\to \en{k}\) is a $k$\emph{-coloring} of $\G$ if the color class \(c^{-1}(i)\) is independent for each \(i\in \en{k}\). The \emph{chromatic number} of $\G$, denoted by \(\chi(\G)\), is the least $k$ for which there is a $k$-coloring of $\G$.

The \emph{Borel chromatic number} of $\G$, denoted by $\chi_B(\G)$, is the least $k\in \N$ for which there is a Borel $k$-coloring of $\G$, i.e., a $k$-coloring whose color classes are Borel sets in $X$ (if such a $k$ exists).
Similarly, the \emph{measurable chromatic number} of $\G$, denoted by \(\chi_\mu(\G)\), is the least $k$ for which there is a measurable $k$-coloring of $\G$ (if such a $k$ exists).
The \emph{approximate measurable chromatic number} of $\G$, denoted by \(\chi_\mu^{\ap}(\G)\), is the least $k$ such that for any \(\eps>0\), there exists a Borel set \(A\sub X\) with \(\mu(A)>1-\eps\) for which the induced subgraph \(\G [A]\) has a measurable $k$-coloring (if such a $k$ exists).
Note that in general \(\chi(\G) \leq \chi_{\mu}(\G)\leq \chi_B(\G)\) and \(\chi_{\mu}^{\ap}(\G) \leq \chi_{\mu}(\G)\).

We write $E_\G$ for the \emph{connectedness relation} of $\G$, i.e., the equivalence relation whose classes are the
connected components of $\G$. When $\G$ has bounded degree, the relation $E_\G$ is a countable Borel equivalence relation. Given a set $A\sub X$, we denote by \([A]_{E_\G }\) the \emph{saturation} of $A$, i.e., the union of all components in $\G$ containing vertices in $A$. We say $A\sub X$ is \emph{invariant} if $A=[A]_{E_\G }$.

By considering the connectedness relation, we can make sense of various properties for $\G$ from the theory of Borel equivalence relations. For instance, we say $\G$ is \emph{ergodic} if $E_{\G}$ is ergodic, i.e., if every $E_\G$-invariant set has measure either $0$ or $1$. 
Additionally, we say that the graph $\G$ is \emph{($\mu$-)probability measure preserving}, or \emph{pmp} for short, if $E_{\G}$ is pmp, i.e., whenever $\theta$ is a partial Borel bijection on $X$ with $\text{graph}(\theta) \sub E_{\G}$, then $\theta$ preserves $\mu$ in the sense that $\mu(A) = \mu(\theta(A))$ for any $\mu$-measurable set $A \sub X$.

Let $E$ be a countable Borel equivalence relation on $(X,\mu)$. A \textit{Borel transport} in $E$ is a Borel function $\varphi: E\to [0,\infty)$. Given a Borel transport $\varphi$ in $E$, we define the functions $\mathrm{out}_\varphi, \mathrm{in}_\varphi: X\to [0,\infty)$ by
    \[
        \mathrm{out}_\varphi(x) = \sum_{y \mathrel{E} x} \varphi(x,y), \quad \text{and}\quad 
        \mathrm{in}_\varphi(x) = \sum_{y \mathrel{E} x} \varphi(y,x).
    \]
The well-known \emph{mass transport principle} demonstrates that, for any Borel transport $\varphi$, 
    \begin{equation}\label{eq:masstransport}
    \int_X \mathrm{out}_\varphi(x) \dmu(x) = \int_X \mathrm{in}_\varphi(x) \dmu(x).
    \end{equation}
For this formulation, see for instance \cite[Proposition~5.3]{tserunyan2022}, as well as \cite[Section~2]{HST2026} where several useful corollaries are deduced. We remark that applications of mass transport are ubiquitous in descriptive set theory (see, for instance, \cite{KechrisMiller2004}) and other areas. The underlying ideas trace back to the seminal work of Feldman and Moore \cite{fm1977}, who introduced the left and right counting measures and proved that their equality is equivalent to $\mu$-invariance (see also the edge measure in Section~\ref{ssec:op}).

\subsection{Spectral theory for bounded operators}

In this paper, we study certain operators associated with bounded-degree Borel pmp graphs. In this subsection, we collect some basic facts from operator theory that we use throughout; most of these can be found in any standard reference, e.g., \cite{Conway2000}.

Let $H$ be a complex Hilbert space with inner product denoted by \(\ang{\cdot, \cdot}\). For any linear operator $T$ on $H$, its \emph{operator norm} is given by \(\norm{T}=\sup\st{\norm{Tf} : f\in H, \norm{f} = 1}\).
We denote by \(B(H)\) the set of bounded operators on $H$, i.e., the linear operators with finite operator norm. 
The \emph{adjoint} of $T\in B(H)$ is the (unique) operator $T^*\in B(H)$ such that \(\ang{Tf,g}=\ang{f, T^*g}\) for all $f,g\in H$. 

For any \(T\in B(H)\), the \emph{spectrum} of $T$ is the set
    \[\sigma(T) = \st{\lambda\in \C: T-\lambda I \text{ is not invertible}}.\]
It is well-known that the spectrum is nonempty and compact in $\C$. 
The \emph{discrete spectrum} of $T$ is the set of $\lambda \in \sigma(T)$ for which $T-\lambda I$ is not injective, i.e., for which there exists an eigenvector $f\in H$ such that \(Tf=\lambda f\).\footnote{Some references use the term \emph{point spectrum} to refer to all eigenvalues, reserving the term ``discrete spectrum'' to mean the eigenvalues with finite algebraic multiplicity. This distinction is not important for our purposes.}
The \emph{continuous spectrum} of $T$ is the set of $\lambda \in \sigma(T)$ for which $T-\lambda I$ is injective but not surjective and has range dense in $H$.
The \emph{residual spectrum} of $T$ is the set of $\lambda \in \sigma(T)$ that are in neither the continuous spectrum nor discrete spectrum.

Most operators discussed in this paper will be \emph{self-adjoint}, i.e., satisfying $T^*=T$. For a self-adjoint operator $T$, it is well-known that $\sigma(T)\sub \R$, and that $T$ is positive if and only if $\sigma(T)\sub [0,\infty)$. Furthermore, in this case, $\lambda\in \sigma(T)$ if and only if there exists a sequence $(f_n)_{n \in \N}$ of unit vectors in $H$ such that
    \[\norm{Tf_n - \lambda f_n}\to 0.\]
In this case, we say that $\lambda$ is an \emph{approximate eigenvalue} of $T$ and that \((f_n)_{n\in \N}\) is a sequence of unit \emph{approximate eigenvectors} of $T$ associated with $\lambda$. This includes the special case when $\lambda$ is a true eigenvalue of $T$, in which case we can take the constant sequence $f_n=f$, where $f$ is an eigenvector for $\lambda$. Finally, when $T$ is self-adjoint, we denote by \(M(T)=\max\sigma(T)\) the maximum spectral value of $T$ and by \(m(T)=\min \sigma(T)\) the minimum spectral value of $T$. It is well-known that
\[
    M(T) = \sup_{f\in H,f\neq0} \frac{\ang{Tf,f}}{\ang{f,f}}, \quad \text{and} \quad m(T) = \inf_{f\in H,f\neq0} \frac{\ang{Tf,f}}{\ang{f,f}}.
\]

Finally, we establish some notation regarding the \emph{block decomposition} \((T_{ij})_{i,j=1}^n\) of $T\in B(H)$ associated with a direct sum decomposition \(\bigoplus_{i=1}^n H_i\) of $H$. That is, \(T_{ij}\in B(H_j, H_i)\) for every \(i,j \in \en{n}\) and given \(x = \sum_{i=1}^n x_i\) with \(x_i\in H_i\), we have
    \[Tx = \sum_{i=1}^n\sum_{j=1}^n T_{ij}(x_j).\]
Note that this is just matrix multiplication with respect to the decomposition of $H$:
    \[Tx = 
    \begin{pmatrix}
        T_{11}&T_{12}&\cdots &T_{1n}\\[4pt]
        T_{21}&T_{22}&\cdots &T_{2n}\\[4pt]
        \vdots &\vdots &\ddots &\vdots \\[4pt]
        T_{n1}&T_{n2}&\cdots &T_{nn}\\[4pt]
    \end{pmatrix}
    \begin{pmatrix} 
        x_{1}\\[4pt]x_{2}\\[4pt]\vdots \\[4pt]x_{n}\\[4pt]
    \end{pmatrix} = 
    \begin{pmatrix}
        \sum_{j=1}^{n}T_{1j}(x_{j})\\[4pt]
        \sum_{j=1}^{n}T_{2j}(x_{j})\\[4pt]
        \vdots \\[4pt]
        \sum_{j=1}^{n}T_{nj}(x_{j})\\[4pt]
    \end{pmatrix}. \]
It is easy to check that if $T$ is self-adjoint, then \(T_{ij}=T_{ji}^*\) for every \(i, j\in \en{n}\). We also have the following useful result.

\begin{prop}[{\cite[Proposition~2.7]{HST2026}}]
\label{prop: block decomp T}
Let \(T\in B(H)\) be a self-adjoint operator. Then, for any $i,j\in \en{n}$:
\begin{enumerate-(1)}
    \item \(M(T_{ii})\le M(T)\) and \(m(T_{ii})\ge m(T)\), i.e., \(\sigma(T_{ii})\sub [m(T), M(T)]\).\label{line: block decomp Mm}
    \item \(\norm{T_{ij}} \le \norm{T}\).\label{line: block decomp norms}
\end{enumerate-(1)}
\end{prop}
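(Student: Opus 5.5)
The statement to prove is Proposition~\ref{prop: block decomp T}: for a self-adjoint $T\in B(H)$ and a decomposition $H=\bigoplus_{i=1}^n H_i$, the diagonal blocks satisfy $\sigma(T_{ii})\sub[m(T),M(T)]$, and every block satisfies $\norm{T_{ij}}\le\norm{T}$. The approach is to write each block as a compression of $T$ and then use the variational formulas for $M$, $m$ and the operator norm recalled above.

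First I will fix notation. Let $P_i$ be the orthogonal projection of $H$ onto $H_i$ and $\iota_j\colon H_j\to H$ the inclusion. Then $T_{ij}=P_iT\iota_j$, viewed as a map $H_j\to H_i$. Since the decomposition is orthogonal, $P_i^*=\iota_i$ as maps between $H$ and $H_i$. Hence $T_{ii}^*=P_iT^*\iota_i=T_{ii}$, so $T_{ii}$ is self-adjoint on $H_i$ and its extreme spectral values are also given by the Rayleigh quotient formulas.

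For \ref{line: block decomp Mm}, take a nonzero $x\in H_i$. Because $x\in H_i$, we have $\ang{T_{ii}x,x}=\ang{P_iTx,x}=\ang{Tx,P_ix}=\ang{Tx,x}$. So the Rayleigh quotient of $T_{ii}$ at $x$ equals the Rayleigh quotient of $T$ at $x$, and it therefore lies in $[m(T),M(T)]$. Taking the supremum and infimum over $x\in H_i$ gives $M(T_{ii})\le M(T)$ and $m(T_{ii})\ge m(T)$. Since $\sigma(T_{ii})\sub[m(T_{ii}),M(T_{ii})]$ for a self-adjoint operator, the inclusion $\sigma(T_{ii})\sub[m(T),M(T)]$ follows.

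For \ref{line: block decomp norms}, take a unit vector $x\in H_j$. Then $\norm{T_{ij}x}=\norm{P_iTx}\le\norm{Tx}\le\norm{T}$, because orthogonal projections have norm at most $1$. Taking the supremum over unit $x\in H_j$ gives $\norm{T_{ij}}\le\norm{T}$.

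There is no real obstacle in this argument. The one point needing care is that the decomposition must be orthogonal, so that $P_i$ is an orthogonal projection with $P_i^*=\iota_i$. This orthogonality is implicit in the direct-sum notation here. It is what makes $T_{ii}$ self-adjoint and justifies using the Rayleigh quotient formulas for it.
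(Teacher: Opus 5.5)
Your proof is correct. The paper gives no proof of its own here, since the statement is quoted from \cite[Proposition~2.7]{HST2026}, and your compression argument is the standard one: $\ang{T_{ii}x,x}=\ang{Tx,x}$ for $x\in H_i$, so the Rayleigh quotients of $T_{ii}$ lie in $[m(T),M(T)]$, and $\norm{T_{ij}}=\norm{P_iT\iota_j}\le\norm{T}$. Your remark that the decomposition must be orthogonal is apt, and this is implicit in the paper's setup, which uses $T_{ij}=T_{ji}^*$.
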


To extend results on the measurable chromatic number to the approximate measurable chromatic number, we will also need the following result. The proof generalizes an argument presented in \cite[Theorem~6.8]{HST2026}. 

\begin{lem}
\label{lem: approx M}
Let \(T\colon L^2(X)\to L^2(X)\) be a bounded self-adjoint operator on $(X,\mu)$. For any $\eps>0$, let \(X=A^\eps\sqcup B^\eps\) be a measurable partition of $X$ such that \(\mu(A^\eps) > 1-\eps\). Let
\[T
=\begin{pmatrix}T^\eps_{A} & T^\eps_{12}\\[4pt]T^\eps_{21} & T^\eps_{B}\end{pmatrix}\] 
be the block decomposition of $T$ with respect to the direct sum decomposition \(L^2(X)= {L^2(A^\eps)} \oplus {L^2(B^\eps)}\). Then
\[\lim_{\eps\to 0} M(T_A^{\eps}) = M(T).\]
\end{lem}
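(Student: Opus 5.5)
The upper bound is immediate from Proposition~\ref{prop: block decomp T}\ref{line: block decomp Mm}: for every $\eps>0$ we have $M(T_A^\eps)\le M(T)$. Here we identify $L^2(A^\eps)$ with the subspace of $L^2(X)$ of functions vanishing off $A^\eps$, so that $T_A^\eps = P_{A^\eps} T|_{L^2(A^\eps)}$, where $P_{A^\eps}$ is multiplication by $\ind_{A^\eps}$. Hence $\limsup_{\eps\to0} M(T_A^\eps)\le M(T)$, and it remains to show $\liminf_{\eps\to 0} M(T_A^\eps)\ge M(T)$.

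For the lower bound I use the variational formula $M(T_A^\eps)=\sup\{\ang{Tg,g}/\ang{g,g} : 0\neq g\in L^2(A^\eps)\}$. Fix $\delta>0$. By the variational characterization of $M(T)$, choose a unit vector $f\in L^2(X)$ with $\ang{Tf,f}>M(T)-\delta$. Then take $g=\ind_{A^\eps} f\in L^2(A^\eps)$ as a test vector. We have
\[
\ang{T_A^\eps g,g}=\ang{Tg,g},\qquad \norm{f-g}^2=\int_{B^\eps}\abs{f}^2\dmu .
\]
The key step is that $\int_{B^\eps}\abs{f}^2\dmu\to0$ as $\eps\to0$, uniformly over all partitions with $\mu(B^\eps)<\eps$. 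This follows from absolute continuity of the integral of the fixed $L^1$ function $\abs{f}^2$: for every $\eta>0$ there is $\eps_0$ such that $\mu(B)<\eps_0$ implies $\int_B\abs{f}^2<\eta$. This step is the only place where care is needed, since the sets $B^\eps$ are arbitrary rather than nested, and absolute continuity handles exactly that.

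Given $\norm{f-g}\to0$, continuity finishes the argument. We have $\abs{\ang{Tf,f}-\ang{Tg,g}}\le \norm{T}\,\norm{f-g}\,(\norm f+\norm g)\le 2\norm{T}\norm{f-g}$, and $\norm g\to 1$. So for small $\eps$ we have $g\ne0$, and
\[
M(T_A^\eps)\ge \frac{\ang{Tg,g}}{\norm g^2}\ge \frac{M(T)-\delta-2\norm T\norm{f-g}}{\norm g^2}\longrightarrow M(T)-\delta .
\]
A small sign caveat: if the numerator is negative, dividing by $\norm g^2\le1$ could make it smaller. This is harmless, because $\norm g^2\to1$ and the quotient still converges to $\ang{Tf,f}>M(T)-\delta$. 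Letting $\delta\to0$ gives $\liminf M(T_A^\eps)\ge M(T)$, and combining with the upper bound proves the lemma.
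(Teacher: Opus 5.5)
Your proof is correct and follows essentially the same route as the paper. The paper likewise picks a unit $f$ nearly attaining $M(T)$, restricts it to $A^\eps$, uses absolute continuity of $\int\abs{f}^2$ to make the mass on $B^\eps$ uniformly small, and gets the upper bound from Proposition~\ref{prop: block decomp T}\ref{line: block decomp Mm}. Your normalization by $\norm{g}^2$ is slightly more careful than the paper's, which bounds $M(T_A^{\eps'})$ below by the unnormalized $\ang{T_A^{\eps'}f_1,f_1}$ with $\norm{f_1}\le 1$; that step is only justified as written when $M(T_A^{\eps'})\ge 0$.
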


\begin{proof}
Fix \(\eps>0\). Let
    \[\eps_0=\frac \eps{3\norm{T}+1}.\]
There exists a unit vector \(f\in L^2(X)\) such that \(\ang{T f, f}>M(T)-\eps_0\).
By the absolute continuity of the integral, there exists \(\delta>0\) such that $\mu(B)<\delta $ implies
    \[\int_B \abs{f(x)}^2  \dmu(x) < \eps_0^2\]
for any measurable \(B\sub X\).
Let \(\eps'<\delta\), and let \(f=f_1+f_2\) be the unique decomposition of $f$ such that \(f_1\in L^2(A^{\eps'})\) and \(f_2\in L^2(B^{\eps'})\).
Using Proposition~\ref{prop: block decomp T}\ref{line: block decomp norms}, we get

\begin{align*}
    \norm{\ang{T f,f} -\ang{T_A^{\eps'}f_1, f_1}} \le 
        \norm{\ang{T_{12}^{\eps'} f_2,f_1}} + 
        \norm{\ang{T_{21}^{\eps'} f_1,f_2}} + 
        \norm{\ang{T_{B}^{\eps'} f_2,f_2}}\\[4pt]
    \le \norm{T_{12}^{\eps'}} \norm{f_2}\norm{f_1} + 
        \norm{T_{21}^{\eps'}} \norm{f_1} \norm{f_2} + 
        \norm{T_{B}^{\eps'} } \norm{f_2}^2 \\[4pt]
    \le 3 \norm{T} \norm{f_2} \le 3\norm{T}\eps_0.
\end{align*}
where the last inequality follows from the fact that $\eps'<\delta$ and
    \[\|f_2\|=\left(\int_{B^{\eps'}} \abs{f}^2 \dmu\right)^\frac12 <{\eps_0}.\]
Hence,
    \begin{align*}
        M(T_A^{\eps'}) &\ge \ang{T^{\eps'}_Af_1, f_1}
        \ge  \ang{Tf, f} -3\norm{T}\eps_0 \\
        &\ge M(T) - \left(3\norm{T}+1\right) \eps_0 =M(T)-\eps.
    \end{align*}
We know from Proposition~\ref{prop: block decomp T}\ref{line: block decomp norms} that \(M(T_A^{\eps})\le M(T)\) for any $\eps$. 
We conclude that indeed, \(\lim_{\eps\to0} M(T_A^{\eps}) = M(T)\).
\end{proof}

\subsection{Operators for bounded-degree graphs}\label{ssec:op}
Given a graph $\G$, we consider three key operators that store information about the structure of $\G$. These definitions extend the notions of the adjacency, degree, and Laplacian matrices for finite graphs. 
Moreover, when $\G$ is a bounded-degree Borel pmp graph, it is known that all three operators are bounded and self-adjoint; see, for instance, \cite{HST2026}, where various basic properties of these operators are also collected. 

For the remainder of this subsection, fix a bounded-degree Borel pmp graph $\G$ on $(X, \mu)$. The \emph{adjacency operator} \(T_\G \colon L^2(X)\to L^2(X)\) is defined by
    \[\forall f\in L^2(X),\ \forall x\in X\quad
    \left(T_\G  f\right)(x) = \sum_{y\sim x} f(y).\]
The \emph{degree operator} \(D_\G \colon L^2(X)\to L^2(X)\) is the multiplication operator defined by
    \[\forall f\in L^2(X),\ \forall x\in X\quad
    \left(D_\G f\right)(x)=\deg(x)\cdot f(x).\]
Finally, the \emph{Laplacian operator} \(L_\G \colon L^2(X)\to L^2(X)\) is defined by \(L_\G =D_\G -T_\G\). Observe that
    \[(L_\G f)(x)
    =\deg(x)f(x) -\sum_{y\sim x}f(y)
    =\sum_{y\sim x}
    \left(f(x) - f(y)\right).\]
In particular, one can see immediately that $L_\G \ind = 0$, i.e., the all-ones vector is an eigenvector of $L_\G$ associated with eigenvalue $0$. Note that, when $\G$ is $d$-regular, then $L_\G = dI - T_\G$.

It is well-known that the Laplacian matrix of a finite graph is positive semidefinite. Similarly, the Laplacian operator is always positive; see, for instance, \cite[Lemma~3.11]{HST2026}. In fact, one can show that for $f \in L^2(X)$, we have
    $$\ang{L_\G f, f} = \frac{1}{2} \int_X \sum_{y \sim x} \vert f(x) - f(y) \vert^2 \dmu(x).$$
In classical spectral graph theory, it is also well-known that the maximum eigenvalue of the Laplacian is at least the maximum degree plus one. We next observe that this result may be extended as follows.

\begin{lem}\label{lem: M(L) degree bound}
    Let $\G$ be a bounded-degree Borel pmp graph on $(X,\mu)$ with maximum degree $\Delta$. Then $M(L_\G) \geq \Delta+1$.
\end{lem}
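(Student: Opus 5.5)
The plan is to use the variational characterization $M(L_\G)=\sup_{f\neq 0}\ang{L_\G f,f}/\ang{f,f}$ from the preliminaries. I will build test functions supported near vertices of degree $\Delta$ and push their Rayleigh quotients arbitrarily close to $\Delta+1$. In the finite case one takes $f=\deg(v)\ind_{v}-\ind_{\calN(v)}$. This gives $\ang{Lf,f}\ge \Delta^2+\Delta$ and $\norm{f}^2=\Delta^2+\Delta$, and more precisely the quotient is at least $\Delta+1$. In the measurable setting I will replace the single vertex $v$ by a positive-measure set $A$ of degree-$\Delta$ vertices that is ``sparse'', meaning that the sets $\{x\}\cup\calN(x)$ for $x\in A$ are pairwise disjoint.

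\textbf{Step 1.} Let $D=\{x:\deg(x)=\Delta\}$, which has positive measure by the definition of $\Delta$. Since $\G$ has bounded degree, the distance-$\le 2$ graph $\G^{\le 2}$ is again a bounded-degree Borel graph. It therefore has a Borel proper coloring with finitely many colors (Kechris--Solecki--Todorcevic). One color class meets $D$ in positive measure; call that intersection $A$. Then distinct $x,x'\in A$ are at $\G$-distance at least $3$, so the closed neighborhoods $\{x\}\cup\calN(x)$ are pairwise disjoint. In particular $A$ is independent and $\calN(A)\cap A=\empt$.

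\textbf{Step 2.} Define $f=\Delta\ind_A-\ind_{\calN(A)}$. Because the closed neighborhoods are disjoint, each $y\in\calN(A)$ has exactly one neighbor in $A$. Mass transport \eqref{eq:masstransport}, applied to $\varphi(x,y)=\ind_A(x)\ind_{x\sim y}$, then gives $\mu(\calN(A))=\Delta\mu(A)$. Hence
\[
\norm{f}^2=\Delta^2\mu(A)+\mu(\calN(A))=\Delta(\Delta+1)\mu(A).
\]

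\textbf{Step 3.} Compute $\ang{L_\G f,f}=\ang{D_\G f,f}-\ang{T_\G f,f}$.
\begin{itemize}
\item For the degree term, $\ang{D_\G f,f}=\Delta^3\mu(A)+\int_{\calN(A)}\deg\dmu$.
\item For the adjacency term, $\ang{T_\G f,f}=\int f(x)\sum_{y\sim x}f(y)\dmu$. On $A$ the integrand is $\Delta\cdot(-\Delta)$, since all neighbors lie in $\calN(A)$. On $\calN(A)$ it is $(-1)(\Delta-e(y))$, where $e(y)=\deg_{\calN(A)}(y)$.
\item Combining, $\ang{T_\G f,f}=-\Delta^2\mu(A)-\Delta^2\mu(A)+\int_{\calN(A)}e\dmu$.
\end{itemize}
Using $\deg(y)\ge 1+e(y)$ on $\calN(A)$, this yields
\[
\ang{L_\G f,f}\ge \Delta^3\mu(A)+\mu(\calN(A))+2\Delta^2\mu(A)=\Delta(\Delta+1)^2\mu(A).
\]
Dividing by $\norm{f}^2$ gives a Rayleigh quotient of at least $\Delta+1$, so $M(L_\G)\ge\Delta+1$ exactly, with no limiting argument needed.

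The main obstacle is Step 1, producing a positive-measure set with disjoint closed neighborhoods. I will cite the finite Borel coloring of the bounded-degree graph $\G^{\le2}$. Alternatively, a maximal Borel $\G^{\le 2}$-independent subset of $D$ can be used, since it dominates $D$ within distance $2$ and hence has positive measure. The remaining bookkeeping is routine, namely the sign of the $e(y)$ terms and checking that mass transport applies because $\G$ is pmp.
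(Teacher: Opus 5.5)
Your proposal is correct and follows essentially the same approach as the paper. Both use the test function $\Delta\ind_A-\ind_{\calN(A)}$ on a positive-measure set $A$ of sparse degree-$\Delta$ vertices, obtained from a finite Borel coloring of a power of $\G$, together with $\mu(\calN(A))=\Delta\mu(A)$. The remaining differences are cosmetic: you only need distance $\ge 3$ where the paper asks for $\ge 4$, and you expand $\ang{D_\G f,f}-\ang{T_\G f,f}$ using $\deg\ge 1+e$, whereas the paper bounds $\frac12\int\sum_{y\sim x}\abs{f(x)-f(y)}^2$ by discarding nonnegative terms.
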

\begin{proof}
The proof is similar to that of \cite[Lemma~3.8(2)]{HST2026}. First, combining \cite[Proposition~4.6]{KST1999} and (the proof of) \cite[Corollary~2.11]{HST2026}, one can find a set $A\sub X$ such that $\mu(A)>0$, every $x\in A$ satisfies $\deg(x) = \Delta$, different points in $A$ are at distance at least 4 from each other, and $\mu(\calN(A)) = \Delta\mu(A)$.

Now define the function $f\in L^2(X)$ as follows:
\begin{align*}
f(x) = \begin{cases}
    \Delta, &\text{ if } x\in A,\\
    -1, &\text{ if } x\in \calN(A),\\
    0, &\text{ otherwise}.
\end{cases}
\end{align*}
Then we can compute 
$$
\langle f,f\rangle = \int_X \abs{f(x)}^2\dmu(x) = \Delta^2\mu(A) + \mu(\calN(A)) = \Delta(\Delta+1)\mu(A),
$$
and
\begin{align*}
\langle L_\G f, f\rangle &= \frac 12 \int_X \sum_{y\sim x} \abs{f(x) - f(y)}^2 \dmu(x)\\
&\geq \frac 12 \int_A \sum_{y\sim x} \abs{f(x) - f(y)}^2 \dmu(x) + \frac 12 \int_{\calN(A)} \sum_{y\sim x} \abs{f(x) - f(y)}^2 \dmu(x)\\
&\geq \frac 12 \Delta(\Delta+1)^2 \mu(A) + \frac 12 (\Delta+1)^2 \mu(\calN(A))\\
&=\Delta(\Delta+1)^2 \mu(A).
\end{align*}
We conclude that 
$$
M(L_\G) \geq \frac{\langle L_\G f, f\rangle}{\langle f,f\rangle} \geq \frac{\Delta(\Delta+1)^2 \mu(A)}{\Delta(\Delta+1)\mu(A)} = \Delta+1,
$$
finishing the proof.
\end{proof}

Finally, we prove a few lemmas that will be useful later. 

\begin{lem}\label{lem: ergodic lap. quadratic form = 0}
    Suppose $\G$ is an ergodic bounded-degree Borel pmp graph on $(X,\mu)$ and \(f\in L^2(X)\) satisfies \(\ang{L_\G  f,f} = 0\). Then $f$ is constant on an invariant conull set.
\end{lem}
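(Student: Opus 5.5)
We will use the quadratic form identity
\[
\ang{L_\G f,f}=\frac12\int_X\sum_{y\sim x}\abs{f(x)-f(y)}^2\dmu(x).
\]
If $\ang{L_\G f,f}=0$, the integrand vanishes for almost every $x$. So there is a conull Borel set $X_0\sub X$ such that $f(x)=f(y)$ whenever $x\in X_0$ and $y\sim x$. First fix a Borel representative of $f$, so that everything below is Borel.

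The difficulty is that this condition only holds at points of $X_0$, while we need it along whole components. We therefore shrink to an invariant conull set. Let $Z=X\setminus X_0$, which is null. Since $\G$ has bounded degree, $E_\G$ is a countable Borel equivalence relation. For countable Borel equivalence relations on a pmp (or even quasi-invariant) space, the saturation of a null set is null. Concretely, by Feldman--Moore we can write $E_\G$ as the orbit relation of countably many Borel involutions or partial bijections $\theta_n$ with graphs in $E_\G$. Each $\theta_n$ preserves $\mu$ by the pmp assumption, so $[Z]_{E_\G}=\bigcup_n\theta_n(Z)$ is null. Hence $Y=X\setminus[Z]_{E_\G}$ is an invariant conull Borel set contained in $X_0$.

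On $Y$, every vertex $x$ satisfies $f(x)=f(y)$ for all neighbors $y$, and these neighbors also lie in $Y$ by invariance. Walking along paths within a component, we get that $f$ is constant on each connected component contained in $Y$. In other words, $f|_Y$ is $E_\G$-invariant.

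Now we use ergodicity. For each rational $q$, the set $\{x\in Y: f(x)<q\}$ is Borel and $E_\G$-invariant, so it has measure $0$ or $1$. Let $c=\sup\{q\in\mathbb{Q}:\mu(\{x\in Y:f(x)<q\})=0\}$. This is finite, because $f$ is real-valued almost everywhere and so these measures tend to $1$ and to $0$ at the two ends. Then $f=c$ almost everywhere on $Y$.

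If $f$ is complex-valued, we apply the same argument to its real and imaginary parts separately. Finally, we remove the null set $\{x\in Y:f(x)\neq c\}$ together with its saturation, which is again null by the previous step and which is invariant because $f|_Y$ is invariant. This leaves an invariant conull set on which $f$ is constant.

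The main obstacle is the passage from an almost-everywhere local condition to an invariant set. That step needs null saturations, which is exactly where the pmp hypothesis is used. Everything else is routine.
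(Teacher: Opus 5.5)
Your proof is correct and follows the same route as the paper: take the conull set where the integrand $\sum_{y\sim x}\abs{f(x)-f(y)}^2$ vanishes, use the pmp hypothesis to see that the saturation of its null complement is null, observe that $f$ is constant on every component in the remaining invariant conull set, and conclude by ergodicity. You also supply details the paper leaves implicit: the Feldman--Moore justification of null saturations, the rational level-set argument for ergodicity, and the complex-valued case.
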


\begin{proof}
    Assume
    \[\ang{L_\G f, f} = \frac 12 
        \int_X \sum_{y\sim x} \abs{f(x) - f(y)}^2 \dmu(x) = 0.\]
    Then there exists some conull set \(A\sub X\) on which the integrand is $0$, i.e., \(f(x)=f(y)\) for any \(x\in A\) and any $y$ adjacent to it.
    Let \(B = X\backslash A\) be its complement, i.e.,
    \[B = \st{x\in X:
    \exists y \sim x \quad f(x)\ne f(y) }.\]
    Then $B$ is a null set, and since $E_\G$ is a countable Borel pmp equivalence relation, it is well-known that also its saturation $\left[B\right]_{E_\G }$ is a null set. Since $f$ is constant on connected components contained in the complement of $[B]_{E_\G}$ by construction, the result follows from ergodicity.
\end{proof}

We next observe that in the block decomposition of the adjacency operator with respect to the color classes of a (measurable) coloring, all diagonal blocks are necessarily the zero operator. 

\begin{lem}\label{lem: zero-diagonal}
    Let $\G$ be a bounded-degree Borel pmp graph. Suppose \(c\colon X\to\{1,\dots,k\}\) is a measurable $k$-coloring of $\G$ with color classes \(X_i=c^{-1}(i)\).
    Consider the block decomposition \((T_{ij})_{i,j=1}^k\) of \(T_\G\) with respect to the direct sum decomposition
    \[L^2(X) = \bigoplus_{i=1}^k L^2(X_i).\]
    Then \(T_{ii}=0\) for all \(i\in \en{k}\).
\end{lem}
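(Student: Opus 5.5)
The plan is to identify the block $T_{ii}$ concretely and show that it annihilates every vector in $L^2(X_i)$. By the definition of the block decomposition, $T_{ii} \in B(L^2(X_i))$ is the compression $P_i T_\G|_{L^2(X_i)}$, where $P_i$ denotes the orthogonal projection of $L^2(X)$ onto $L^2(X_i)$. Concretely, $P_i g = \ind_{X_i} g$. So for $f \in L^2(X_i)$, which we regard as a function on $X$ vanishing off $X_i$, we have
\[
(T_{ii} f)(x) = \ind_{X_i}(x) \sum_{y \sim x} f(y).
\]

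Next we evaluate this pointwise. Fix $x \in X_i$. Because $X_i = c^{-1}(i)$ is $\G$-independent and $\G$ has no self-loops, no neighbor $y$ of $x$ lies in $X_i$. Since $f$ vanishes outside $X_i$, every term $f(y)$ with $y \sim x$ is zero, and hence $(T_\G f)(x) = 0$. For $x \notin X_i$, the indicator factor is already zero. Therefore $T_{ii} f = 0$ almost everywhere, that is, as an element of $L^2(X_i)$.

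One small technical point needs attention. The coloring is only measurable, and statements are made modulo null sets, so independence might hold only off a null set. To handle this, we replace the relevant null set by its $E_\G$-saturation, which is still null because $\G$ is pmp and $E_\G$ is countable (as in the proof of Lemma~\ref{lem: ergodic lap. quadratic form = 0}). Alternatively, we simply use the definition of a $k$-coloring, under which each color class is genuinely independent. Then the pointwise computation holds for every $x$ outside a null set. Changing $f$ on a null set does not affect $T_\G f$ almost everywhere, again because the saturation of a null set is null.

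Since $f \in L^2(X_i)$ was arbitrary, it follows that $T_{ii} = 0$ for every $i \in \en{k}$. The argument involves no real obstacle; the only step requiring care is the null-set bookkeeping just described.
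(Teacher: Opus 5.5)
Your proposal is correct and follows essentially the same argument as the paper. In both, the diagonal block $T_{ii}$ acts on $f\in L^2(X_i)$ by summing $f$ over the neighbors of $x$, and independence of the color class $X_i$ forces every term to vanish; your extra null-set bookkeeping (saturations of null sets are null) is a harmless refinement that the paper leaves implicit.
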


\begin{proof}
    For any $i\in \en{k}$, the block diagonal operator \(T_{ii}\) is the operator on $L^2(X_i)$ such that 
        \[(T_{ii}f)(x) = \sum_{\substack{y \sim x\\ y\in X_i}} f(y)\] 
    for each $f\in L^2(X_i)$ and $x\in X_i$.
    By definition of the color classes, if $y$ is adjacent to a vertex $x \in X_i$, then $y \notin X_i$. So for any \(f\in L^2(X_i)\) and any \(x\in X_i\),
        \[\left(T_{ii} f\right)(x)
        =\sum_{\substack{y \sim x\\ y\in X_i}} f(y)
        =0. \qedhere\]
\end{proof}

Next, we observe the following fact, which was proved in \cite[Corollary~3.16]{HST2026} for regular graphs.

\begin{lem}\label{lem: -M and abs eig'fns}
    If \(-M(T_\G) \in \sigma(T_\G)\) and \((f_n)_{n\in\N}\) is a sequence of unit approximate eigenvectors of $T_\G$ associated with $-M(T_\G)$, then \(\norm{T_\G\abs{f_n} - M(T_\G)\abs{f_n}} \to 0\).
\end{lem}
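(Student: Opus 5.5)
The idea is to compare the quadratic forms of $T_\G$ at $f_n$ and at $\abs{f_n}$. Write $M = M(T_\G)$. Since $T_\G$ has nonnegative kernel, pointwise
\[
\abs{(T_\G f_n)(x)} \le \sum_{y\sim x}\abs{f_n(y)} = (T_\G\abs{f_n})(x).
\]
Integrating against $\abs{f_n}$ gives
\[
\abs{\ang{T_\G f_n,f_n}} \le \ang{T_\G \abs{f_n},\abs{f_n}}.
\]

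By hypothesis $\norm{T_\G f_n + M f_n}\to 0$, so by Cauchy--Schwarz with $\norm{f_n}=1$ we get $\ang{T_\G f_n,f_n}\to -M$. Hence $\liminf_n \ang{T_\G\abs{f_n},\abs{f_n}} \ge M$. Since $\abs{f_n}$ is a unit vector, the variational characterization of $M(T_\G)$ gives $\ang{T_\G\abs{f_n},\abs{f_n}}\le M$. Therefore
\[
\ang{T_\G\abs{f_n},\abs{f_n}}\to M.
\]
Note that $M\ge 0$ here: indeed $M \ge \abs{\ang{T_\G f_n,f_n}} \to M$ and $-M\in\sigma(T_\G)\subseteq[m(T_\G),M]$, so $M \ge -M$.

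The remaining step is to upgrade convergence of the Rayleigh quotient to convergence in norm. This is the standard fact that, for a self-adjoint $T$ and unit vectors $g_n$ with $\ang{Tg_n,g_n}\to M(T)$, one has $\norm{Tg_n - M(T) g_n}\to 0$. To prove it, set $S = M(T)I - T \ge 0$. The Cauchy--Schwarz inequality for the positive form $\ang{S\cdot,\cdot}$ gives
\[
\norm{Sg}^2 = \ang{Sg,Sg} \le \ang{Sg,g}^{1/2}\ang{S^2g,Sg}^{1/2} \le \ang{Sg,g}^{1/2}\norm{S}^{3/2}\norm{Sg}.
\]
Dividing through yields $\norm{Sg}^2 \le \norm{S}^{3}\ang{Sg,g}$; alternatively one can write $\norm{S g}^2 \le \norm{S}\ang{Sg,g}$ directly using $S^{1/2}$. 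Applying this with $g=\abs{f_n}$ gives $\norm{T_\G\abs{f_n}-M\abs{f_n}}^2 \le \norm{S}\,(M-\ang{T_\G\abs{f_n},\abs{f_n}})\to 0$, which proves the lemma.

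Everything above is routine. The one point needing care is the pointwise domination $\abs{T_\G f}\le T_\G\abs{f}$, together with the fact that the functions are complex-valued, which is handled by taking absolute values throughout. I expect no real obstacle; the argument mirrors \cite[Corollary~3.16]{HST2026} but uses $M(T_\G)$ in place of the degree $d$.
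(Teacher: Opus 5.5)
Your proof is correct and follows the paper's argument. The pointwise domination $|\langle T_\G f_n,f_n\rangle|\le\langle T_\G|f_n|,|f_n|\rangle\le M$ forces $\langle T_\G|f_n|,|f_n|\rangle\to M$ in both. Only the final upgrade step differs: the paper expands the square and implicitly uses $\|T_\G\|=M(T_\G)$, whereas your bound $\|Sg\|^2\le\|S\|\langle Sg,g\rangle$ for $S=MI-T_\G$ does not need that fact, and here gives the same quantity $2M(M-\langle T_\G|f_n|,|f_n|\rangle)$ since $\|S\|=2M$.

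One slip: in your first Cauchy--Schwarz derivation the bound should be $\langle S^2g,Sg\rangle^{1/2}\le\|S\|^{1/2}\|Sg\|$, not $\|S\|^{3/2}\|Sg\|$. The resulting claim $\|Sg\|^2\le\|S\|^3\langle Sg,g\rangle$ fails for $S=sI$ with $0<s<1$. This does not affect the proof, because the $S^{1/2}$ version you actually apply is correct.
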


\begin{proof}
    Since \(\norm{T_\G f_n + M(T_\G)f_n} \to 0\), it follows that \(\ang{T_\G f_n, f_n} \to -M(T_\G)\). Then,
        \[\abs{\ang{T_\G f_n, f_n}}
        \le \ang{T_\G \abs{f_n}, \abs{f_n}}
        \le M(T_\G)\]
    for all $n\in \N$. However, in the limit \(\abs{\ang{T_\G f_n, f_n}}\to M(T_\G)\), forcing \(\abs{\ang{T_\G \abs{f_n}, \abs{f_n}}}\to M(T_\G)\) and
    \begin{align*}
        \norm{T_\G\abs{f_n} - M(T_\G)\abs{f_n}}^2
        &=
        \ang{T_\G\abs{f_n} - M(T_\G)\abs{f_n},T_\G\abs{f_n} - M(T_\G)\abs{f_n}}\\[4pt]
        &\le \norm{T_\G}^2
        - 2M(T_\G)\ang{T_\G\abs{f_n}, \abs{f_n}} 
        + M(T_\G)^2\\[4pt]
        &=2M(T_\G)\left(M(T_\G)
        - \ang{T_\G\abs{f_n}, \abs{f_n}}\right)\\
        &\to 0.
    \end{align*}
    Thus \(\norm{T_\G\abs{f_n} - M(T_\G)\abs{f_n}} \to 0\) as desired.
\end{proof}

Finally, we record an easy observation regarding edge measures: given a bounded-degree Borel pmp graph $\G$ on $(X,\mu)$, the \emph{edge measure} $\nu$ is defined such that, for each measurable $A \sub E(\G)$,
    \[\nu(A)=\int_X \abs{A_x} \dmu(x),\]
where \(A_x=\left\{ y\in X: (x,y)\in A \right\}\). Note that by a straightforward application of the mass transport principle~\eqref{eq:masstransport}, we could equivalently define the edge measure as
    \[\nu(A)=\int_X \abs{A^x} \dmu(x),\]
where \(A^x=\left\{ y\in X: (y,x)\in A \right\}\). In particular, given a subset $Y\sub X$, the measure of its \textit{boundary edges} $E_Y = E(\G) \cap (Y\times Y^c)$ satisfies
\[
\nu(E_Y) = \int_Y \deg_{Y^c}(x) \dmu(x) = \int_{Y^c} \deg_{Y}(x) \dmu(x).
\]

\begin{lem}\label{lem: edge density representor}
    Let $\G$ be a bounded-degree Borel pmp graph on $(X,\mu)$. Let $Y\sub X$ be a Borel subset.
    Then $f = \ind_Y-\mu(Y)\mathbf 1 \in L^2_0(X)$ satisfies
        \[\frac{\ang{L_\G f, f}} {\norm{f}^2}\
        = \frac {\nu(E_Y)}{\mu(Y)(1-\mu(Y))}.\]
\end{lem}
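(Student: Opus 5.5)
Write $p=\mu(Y)$ and $f=\ind_Y-p\ind$. The proof is a direct computation of the numerator and the denominator, followed by dividing them.

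First, $f\in L^2_0(X)$, since $\int_X f\dmu = p - p = 0$. For the denominator, $f$ equals $1-p$ on $Y$ and $-p$ on $Y^c$. Hence
\[\norm{f}^2=(1-p)^2p+p^2(1-p)=p(1-p).\]

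For the numerator, I would use the quadratic form identity $\ang{L_\G f,f}=\frac12\int_X\sum_{y\sim x}\abs{f(x)-f(y)}^2\dmu(x)$ recorded above. A shortcut is that $L_\G\ind=0$ and $L_\G$ is self-adjoint, so $\ang{L_\G f,f}=\ang{L_\G\ind_Y,\ind_Y}$. Either way, the differences $f(x)-f(y)$ equal $\pm1$ exactly when one of $x,y$ lies in $Y$ and the other in $Y^c$, and vanish otherwise. Thus the integrand at $x$ is $\deg_{Y^c}(x)$ for $x\in Y$ and $\deg_Y(x)$ for $x\in Y^c$. This gives
\[\ang{L_\G f,f}=\frac12\left(\int_Y\deg_{Y^c}\dmu+\int_{Y^c}\deg_Y\dmu\right).\]

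The only mildly nontrivial point is identifying the two integrals. By the mass-transport identity for boundary edges stated just before the lemma, both integrals equal $\nu(E_Y)$. So the numerator is $\nu(E_Y)$, and dividing by $p(1-p)$ gives the claim.

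For the formula to make sense we need $0<\mu(Y)<1$, which is implicitly assumed; otherwise $f=0$. No step is a real obstacle beyond invoking mass transport for the symmetry of the boundary edge count.
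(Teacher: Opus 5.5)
Your proof is correct and essentially matches the paper's. The paper takes exactly your shortcut, $\ang{L_\G f,f}=\ang{L_\G\ind_Y,\ind_Y}$ via $L_\G\ind=0$ and self-adjointness, and evaluates $L_\G\ind_Y$ pointwise to get $\int_Y\deg_{Y^c}\dmu$, which is $\nu(E_Y)$ directly from the definition of $\nu$; so the mass-transport symmetry needed in your Dirichlet-form route is never invoked. Your caveat that $0<\mu(Y)<1$ must be assumed is fair, and the paper leaves it implicit.
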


\begin{proof}
    First, we can compute
    \begin{align*}
        \int_X f(x) \dmu(x)
    &=
    \int_Y (1-\mu(Y)) \dmu(x)
    +\int_{Y^c} -\mu(Y) \dmu(x)\\
    &=\mu(Y)(1-\mu(Y))
    -(1-\mu(Y))\mu(Y)
    =0,
    \end{align*}
    and so $f$ is indeed orthogonal to $\ind$. Since $L_\G \mathbf 1=0$, we deduce that $L_\G f = L_\G \ind_Y$, and
        \[L_\G f(x) = L_\G
        \ind_Y(x)
        =\sum_{y\sim x}
        \left(\ind_Y(x) - \ind_Y(y)\right)
        =\begin{cases}
        \deg_{Y^c}(x)&x\in Y\\
        -\deg_Y(x)&x\in Y^c
        \end{cases}.\]
    We thus get
    \begin{align*}
        \ang{L_\G f,  f}
        &= \ang{L_\G \ind_Y,  \ind_Y} 
        -\mu(Y) \ang{L_\G \ind_Y,  \ind}
        = \ang{L_\G  \ind_Y,  \ind_Y} 
        -\mu(Y) \ang{\ind_Y,   L_\G \ind}\\
        &= \int_X L_\G  \ind_Y(x)\ \overline{\ind_Y(x)} \dmu (x)
        = \int_Y L_\G  \ind_Y(x) \dmu (x)\\
        &= \int_Y \deg_{Y^c}(x) \dmu (x) =\nu(E_Y).
    \end{align*}
    Finally,
    \begin{align*}
        \norm{f}^2
        &=\int_X
        ({\ind_Y(x)}^2-2\mu(Y)\ind_Y(x)
        +\mu(Y)^2) \dmu(x)\\
        &=\mu(Y)-2\mu(Y)^2+\mu(Y)^2=\mu(Y)(1-\mu(Y)),
    \end{align*}
    as desired.
\end{proof}

\subsection{Spectral gap}

In this final preliminary subsection, we briefly discuss spectral gap. For regular graphs, one can easily define spectral gap using either the adjacency operator or, equivalently, the Laplacian operator; see, for instance, \cite{HST2026}. When $\G$ is not regular, the spectral gaps for these operators do not necessarily coincide, even for finite connected graphs. In fact, in the Borel setting, it is not clear to us whether $T_\G$ having spectral gap is equivalent to $L_\G$ having spectral gap. While the Laplacian operator seems to yield a more natural definition of spectral gap in many situations, we will introduce both the spectral gap of the adjacency operator and that of the Laplacian operator here, since we will use them both in this paper.

We will say an ergodic graph $\G$ has \emph{adjacency spectral gap} if $M(T_\G)$ is isolated in $\sigma(T_\G)$, i.e., if there exists $\eps>0$ such that \(\sigma(T_\G) \sub \left[m(T_\G), M(T_\G)-\eps\right] \cup \{M(T_\G)\}\).

Similarly, we will say an ergodic graph $\G$ has \emph{Laplacian spectral gap} if $m(L_\G) = 0$ is isolated in $\sigma(L_\G)$, i.e., if there exists $\eps>0$ such that \(\sigma(L_\G) \sub \{0\} \cup \left[\eps, M(L_\G)\right]\).
We will also write 
$$m_0(L_\G) = \inf_{f \in L^2_0(X), f \neq 0} \frac{\ang{L_\G f, f}}{\norm{f}^2}.$$
Note that by construction, when $\G$ has Laplacian spectral gap, $m_0(L_\G)$ is exactly the spectral gap, i.e., the distance between $0$ and the second-smallest spectral value.

\section{A Lower Bound on the Chromatic Number}\label{sec: main theorem}

Our main goal in this section is to prove the following new lower bound on the approximate measurable chromatic number.

\maintheorem*

We remark first that, when $\G$ is $d$-regular, the lower bound in Theorem~\ref{thm: approx m-nik} reduces to the Hoffman lower bound in \cite[Theorem~6.8]{HST2026}. Indeed, in this case, $M(T_\G) = d$ and $M(L_\G) = d - m(T_\G)$, so that 
$$1 - \frac{M(T_\G)}{m(T_\G)} = 1 - \frac{d}{m(T_\G)}$$
and
$$1 + \frac{M(T_\G)}{M(L_\G) - M(T_\G)} = 1 + \frac{d}{(d - m(T_\G)) - d} = 1 - \frac{d}{m(T_\G)}.$$

However, for non-regular graphs, the Nikiforov bound can be an improvement over the Hoffman bound, and in general the two bounds are incomparable (see \cite{Nikiforov2007}). It appears to be open whether there is a simple characterization of the graphs for which Nikiforov's bound is tighter than Hoffman's.

\subsection{Main Technical Theorem}

To obtain Theorem~\ref{thm: approx m-nik}, as well as various further results, we first adapt a general theorem of Kolotilina \cite{Kolotilina2010} for matrices to the setting of bounded self-adjoint operators. 

\begin{thm} \label{thm: meas Kol}
    Let $(X,\mu)$ be a measure space and $L\colon L^2(X)\to L^2(X)$ be a bounded self-adjoint operator. Suppose $X$ is measurably partitioned into $X=\bigsqcup_{i=1}^k X_i$.
    Let \(L=(L_{ij})_{i,j=1}^k\) be the block partition of $L$ according to the direct sum $L^2(X)=\bigoplus_{i=1}^k
    L^2(X_i)$.
    Suppose $D = Diag(L_{11},\dots,L_{kk})$ is the block diagonal part of $L$,  and $T = D-L$. Moreover, let $\ind_i$ be the indicator function on $X_i$ for each $i\in \en{k}$. Then:
    \begin{enumerate-(1)}
        \item For every \(\xi\in \left[-\frac 1{k-1}, 1\right],\)
            \[M(L)\ge M(D-\xi T ).\]
        \label{line: meas Kol 1}
        \item If \(\xi\in \left(-\frac 1{k-1}, 1\right)\), then equality holds in \ref{line: meas Kol 1} if and only if there are some $i\in\en{k}$ and some sequence of unit approximate eigenvectors $(h_n)_{n\in \N}$ of $L$ associated with $M(L)$ such that each $h_n$ is supported only on the $i$th component, i.e., $h_n=\ind_ih_n$ for all $n\in \N$.
        \label{line: meas Kol 2}
        \item If $\xi = -\frac 1{k-1}$ and $k\ge 3$, then equality holds in \ref{line: meas Kol 1} if and only if there exist a sequence $(h_n)_{n\in \N}$ of unit vectors in $L^2(X)$ and some $i\in \en{k}$ such that \(\left((\ind_i-\ind_j)h_n\right)_{n\in \N}\) is a sequence of approximate eigenvectors of \(L\) associated with $M(L)$ for any $j\in\en{k}\backslash\{i\}$.
        \label{line: meas Kol 3}
    \end{enumerate-(1)}
    
\end{thm}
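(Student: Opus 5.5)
The plan is to write $D-\xi T$ as a convex combination of unitary conjugates of $L$. Conjugation preserves $M$, and $M$ of a convex combination is at most the largest $M$ of its terms; equality in that inequality forces every term to be nearly extremal on a common sequence of vectors.

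\textbf{Setup.} Let $\omega=e^{2\pi i/k}$ and, for $s\in\{0,\dots,k-1\}$, let $U_s=\sum_{j=1}^k \omega^{sj}\ind_j$ act as a multiplication operator. This is a unitary on the complex space $L^2(X)$ preserving each $L^2(X_j)$, with $U_0=I$. In block form $U_s^*LU_s$ has $(i,j)$ block $\omega^{s(j-i)}L_{ij}$.

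Fix $a,b\ge 0$ with $a+(k-1)b=1$ and put
\[
S=aL+b\sum_{s=1}^{k-1}U_s^*LU_s .
\]
Its diagonal blocks are $L_{ii}$. For $i\neq j$, the $(i,j)$ block is $\big(a+b\sum_{s\neq 0}\omega^{s(j-i)}\big)L_{ij}=(a-b)L_{ij}$, since the sum over $s\neq 0$ of a nontrivial $k$-th root of unity is $-1$. As $T=D-L$ is the off-diagonal part of $L$ with a sign, we get $S=D-\xi T$ with $\xi=a-b$. As $(a,b)$ ranges over this segment, $\xi$ ranges over $[-\frac1{k-1},1]$. Here $a=0$ gives $\xi=-\frac1{k-1}$, $b=0$ gives $\xi=1$, and $\xi$ lies in the open interval exactly when $a,b>0$.

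\textbf{Part (1).} Since $M(U_s^*LU_s)=M(L)$ and $M(\cdot)$ is the supremum of the quadratic form, we get $\ang{Sf,f}\le (a+(k-1)b)M(L)\norm f^2=M(L)\norm f^2$. This proves \ref{line: meas Kol 1}.

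\textbf{Parts (2) and (3), forward direction.} Suppose equality holds and take unit vectors $g_n$ with $\ang{Sg_n,g_n}\to M(L)$. Each term $\ang{LU_sg_n,U_sg_n}$ is at most $M(L)$. So every term carrying positive weight tends to $M(L)$: all $s$ when $\xi\in(-\frac1{k-1},1)$, and only $s\neq 0$ when $a=0$.

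Next I use the standard fact that if $\norm{f_n}=1$ and $\ang{Lf_n,f_n}\to M(L)$, then $\norm{(M(L)-L)f_n}\to 0$. This follows because $P=M(L)-L\ge 0$ and $\norm{Pf}^2\le\norm P\,\ang{Pf,f}$. Hence the relevant $U_sg_n$ are approximate eigenvectors for $M(L)$, and so are any fixed linear combinations of them.

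For (2), the Fourier inversion $\ind_ig_n=\frac1k\sum_{s=0}^{k-1}\omega^{-si}U_sg_n$ shows that each $\ind_ig_n$ is an approximate eigenvector. By pigeonhole, some $i$ has $\norm{\ind_ig_n}\ge k^{-1/2}$ for infinitely many $n$. Passing to that subsequence and normalizing gives $h_n=\ind_ih_n$ as required.

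For (3), only $s\neq 0$ is available. The combination $\frac1k\sum_{s\neq 0}(\omega^{-si}-\omega^{-sj})U_sg_n$ equals $(\ind_i-\ind_j)g_n$, because the $s=0$ terms would cancel anyway. So $h_n=g_n$ works, for any $i$.

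\textbf{Parts (2) and (3), converse direction.} For (2), if $h_n=\ind_ih_n$ are unit approximate eigenvectors, then $Sh_n=L_{ii}h_n$. Hence $\ang{Sh_n,h_n}=\ang{Lh_n,h_n}\to M(L)$, giving $M(S)\ge M(L)$.

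For (3), suppose $(\ind_i-\ind_j)h_n$ is an approximate eigenvector for every $j\neq i$. Then so is every combination $\sum_jc_j\ind_jh_n$ with $\sum_jc_j=0$ and bounded coefficients, since these are spanned by the differences. For $s\neq0$, the vector $U_sh_n=\sum_j\omega^{sj}\ind_jh_n$ has coefficients summing to $0$. Therefore $\ang{LU_sh_n,U_sh_n}\to M(L)$ while $\norm{U_sh_n}=\norm{h_n}=1$. Summing over $s\neq 0$ with weight $b=\frac1{k-1}$ gives $\ang{Sh_n,h_n}\to M(L)$, which is the equality.

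The main obstacle I expect is bookkeeping rather than a conceptual step. It lies in the uniformity: the approximate-eigenvector statements must survive fixed linear combinations, and the norms must stay bounded below. The latter is what forces the pigeonhole/subsequence step in (2). In (3) it relies on $\norm{U_sh_n}=\norm{h_n}=1$, which holds automatically because $U_s$ is unitary.
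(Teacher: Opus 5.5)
Your proof is correct, and it takes a genuinely different route from the paper's.

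The paper averages the quadratic form of $L$ over the $k$ real test vectors $g^{(i)}=f+\alpha\ind_i f$, with $\alpha\in[-k,0]$. This yields $D-\xi T$ with the nonlinear parametrization $\xi=\frac{k+2\alpha}{k+2\alpha+\alpha^2}$. In the equality case, the paper recovers $f_n$ from $\sum_i g_n^{(i)}=(k+\alpha)f_n$, which is unavailable exactly when $\alpha=-k$. It recovers $\ind_i f_n$ from $g_n^{(i)}-f_n=\alpha\ind_i f_n$, which is unavailable exactly when $\alpha=0$. It proves the converses by hand: in (2) via the splitting $\norm{Lh_n-M(L)h_n}^2=\norm{Dh_n-M(L)h_n}^2+\norm{Th_n}^2$, and in (3) via block-by-block triangle inequalities.

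Both proofs rest on the same principle: $D-\xi T=\Phi(L)$ for a unital completely positive map $\Phi$ whose Kraus operators are multiplication operators constant on each $X_i$. The paper's Kraus operators are $(I+\alpha\ind_i)/\sqrt{k+2\alpha+\alpha^2}$; yours are $\sqrt{a}\,I$ and $\sqrt{b}\,U_s$.

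Your choice of characters of $\mathbb{Z}/k$ buys an affine parametrization $\xi=a-b$. As a result:
\begin{enumerate-(i)}
\item the endpoint cases are visibly the cases in which one weight vanishes;
\item the forward directions reduce to Fourier inversion;
\item the converses become short quadratic-form computations.
\end{enumerate-(i)}
In particular, your converse of (3) is cleaner than the paper's. It rests on the observation that $U_sh_n$ lies in the span of the differences $(\ind_i-\ind_j)h_n$, and it does not use $k\ge 3$.

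The price is that you need complex scalars. This is harmless here, since $L^2(X)$ is complex, whereas the paper's test vectors are real and its argument runs verbatim over a real Hilbert space.

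One small slip: in the converse of (2), $Sh_n=L_{ii}h_n$ is not literally true, since $Sh_n=L_{ii}h_n+\xi\sum_{l\neq i}L_{li}h_n$. However, you only need $\ind_i Sh_n=L_{ii}h_n$, hence $\ang{Sh_n,h_n}=\ang{Lh_n,h_n}$, and that does hold.
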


\begin{proof}
    \ref{line: meas Kol 1}
    Let $f\in L^2(X)$ be a nonzero function. Fix $\alpha\in \R$. Then, for any $i\in \en{k}$, let $g^{(i)}=f+\alpha\ind_if$.
    Notice that
        \begin{align}
        \begin{split}\label{eq:Kol1}
        \sum_{i=1}^k \ang{Lg^{(i)}, g^{(i)}}
        &= \sum_{i=1}^k \ang{L(f+\alpha \ind_if), f+\alpha \ind_if}\\
        &= k \ang{ Lf, f}
        + \alpha \sum_{i=1}^k \ang{Lf, \ind_if}\\
        &\quad + \alpha \sum_{i=1}^k \ang{L\ind_if, f}
        + \alpha^2 \sum_{i=1}^k \ang{L\ind_if, \ind_if}\\
        & = \left(k + 2\alpha \right) \ang{Lf, f}
        + \alpha^2 \ang{Df,f}.
        \end{split}
        \end{align}
    The exact same computation without $L$ also gives
        \begin{equation}\label{eq:Kol2}
        \sum_{i=1}^k \norm{g^{(i)}}^2
        = \left(k + 2\alpha+\alpha^2 \right) \norm{f}^2.
        \end{equation}
    Denote
        \[K= \frac 1{k + 2\alpha+\alpha^2}
        \left[ \left(k+2\alpha\right)L + \alpha^2 D\right].\]
    Notice that if \(\xi=\frac{k + 2\alpha}{k + 2\alpha+\alpha^2}\),
        \[ K=\frac 1{k + 2\alpha+\alpha^2}
        \left[ -\left(k+2\alpha\right)T
        + \left(k + 2\alpha+\alpha^2\right)D \right]
        =D-\xi T\]
    and \(\xi\in \left[-\frac 1{k-1}, 1 \right]\) whenever \(\alpha\in \left[-k, 0\right]\).

    Now, pick $(f_n)_{n\in \N}$ to be a sequence of unit vectors in $L^2(X)$ such that for every $n\in \N$,
        \[\ang{Kf_n, f_n}  \ge M(K)-\frac 1n .\]
    Then, for each $n\in \N$ and $i\in \en{k}$, let $g_n^{(i)}=f_n+\alpha \ind_i{f_n}$. Using \eqref{eq:Kol1} and \eqref{eq:Kol2}, we can now compute
        \begin{equation}
        \begin{split}
            M(K)-\frac 1n  
            \le  \ang{Kf_n, f_n}
            &= \frac 1{k+2\alpha+\alpha^2}\ \left[ \left(k + 2\alpha \right) \ang{ Lf_n, f_n } + \alpha^2 \ang{Df_n,f_n}\right]\\[4pt]
            &= \frac 1{k+2\alpha+\alpha^2} \sum_{i=1}^k \ang{Lg_n^{(i)}, g_n^{(i)}} \\[4pt]
            &\le \frac {M(L)}{k+2\alpha+\alpha^2} \sum_{i=1}^k \norm{g_n^{(i)}}^2 
            =M(L)\norm{f_n}^2 =M(L). \label{eqn: multline}
        \end{split}
        \end{equation}
    Since $n$ can be made arbitrarily large, we get \(M(K)\le M(L)\), as desired.

    \ref{line: meas Kol 2}
    Consider the series of inequalities \eqref{eqn: multline} used to prove \ref{line: meas Kol 1}. If we assume $\lim_{n\to \infty} \left( M(K)-\frac 1n\right)=M(L)$, then necessarily
        \[\lim_{n\to\infty} \left(M(L) \norm{g_n^{(i)}}^2 - \ang{Lg_n^{(i)}, g_n^{(i)}}\right)=0\]
    for all $i\in \en{k}$. 
    Since $g_n^{(i)}=f_n+\alpha\ind_i {f_n}$ and $\norm{f_n}=1$ for all $i\in \en{k}$, the sequence of norms \( \left(\norm{g_n^{(i)}}\right)_{n\in \N} \) is bounded above, and thus we can further deduce that
        \[\lim_{n\to \infty} \norm{Lg_n^{(i)} -M(L)g_n^{(i)}} =0\]
    for all $i \in \en{k}$ (as in the proof of Lemma~\ref{lem: -M and abs eig'fns}). 
    Then, notice that $\sum_{i=1}^k g_n^{(i)}=(\alpha+k)f_n$, so
        \[(\alpha+k) \lim_{n\to\infty} \norm{Lf_n - M(L)f_n}
        \le \lim_{n\to\infty} \sum_{i=1}^k \norm{Lg_n^{(i)} - M(L)g_n^{(i)}} =0.\]
    Therefore, assuming $\alpha\ne -k$ (i.e., $\xi\ne -\frac 1{k-1}$), $(f_n)_{n\in \N}$ is a sequence of approximate eigenvectors of $L$ associated with $M(L)$. Then for each $i\in \en{k}$, notice that $\alpha \ind_i{f_n}=g_n^{(i)}-f_n$, so
        \[\alpha \lim_{n\to\infty} \norm{L\ind_i f_n-M(L)\ind_i f_n}
        \le \lim_{n\to\infty} \norm{Lg_n^{(i)}-M(L)g_n^{(i)}} + \norm{L{f_n}-M(L){f_n}}=0. \]
    Assuming $\alpha\ne 0$ (i.e., $\xi\ne 1$), we get 
    \(\lim_{n\to\infty} \norm{L\ind_i f_n-M(L)\ind_i f_n}=0\). 
    Seeing as 
    \(\sum_{i=1}^k \norm{\ind_i f_n} \ge \norm{f_n} = 1\),
    for any $n\in \N$, there is some $i\in \en{k}$ such that \(\norm{\ind_if_n}\ge \frac 1k\).
    Then, by a pigeonhole argument, for some fixed $i\in \en{k}$, there exists a subsequence 
    \(\left(\ind_i f_{n_j}\right)_{j \in \N}\) with norms bounded below, i.e. $\norm{\ind_i f_{n_j}} \geq \frac 1k$ for all $j\in \N$.
    Thus, without loss of generality, we can rescale this subsequence
        \[h_j = \frac 1{\norm{\ind_i f_{n_j}}}\ind_i f_{n_j}\]
    to achieve a sequence \(\left(h_j\right)_{j\in \N}\), supported on $X_i$, of unit approximate eigenvectors of $L$ associated with $M(L)$:
        \[\lim_{j\to\infty} 
        \norm{Lh_j - M(L)h_j} \leq k
        \lim_{j\to\infty} 
        \norm{L\ind_i f_{n_j}-M(L)\ind_i f_{n_j}} = 0. \]
    
    Conversely, assume there is a sequence of (unit) approximate eigenvectors $(h_n)_{n\in \N}$ of $L$ associated with $M(L)$ which is supported on $X_i$ for some $i\in \en{k}$. 
    Recall that $D$ is the block diagonal part of $L$, and $T = D-L$. Hence, $Dh_n$ is be supported on $X_i$, whereas $-Th_n$ is supported on $X\backslash X_i$. Thus, we can decompose the norm
        \[\norm{Lh_n-M(L)h_n}^2
        =\norm{Dh_n-M(L)h_n}^2 + \norm{-Th_n}^2.\]
    Therefore,
        \begin{align*}
        \lim_{n\to \infty} \norm{D h_n - M(L) h_n}
        &\le \lim_{n\to \infty}
        \norm{L h_n - M(L) h_n} = 0, \text{ and}\\
        \lim_{n\to \infty}
        \norm{T h_n}
        &\le \lim_{n\to \infty}
        \norm{L h_n - M(L) h_n} = 0
        \end{align*}
    We deduce that
        \[\lim_{n\to \infty} 
        \norm{(D- \xi T)h_n - M(L)h_n}
        \le \lim_{n\to \infty} 
        \norm{Dh_n - M(L)h_n}
        +\abs{\xi}
        \norm{ Th_n} = 0.\]
    In particular, $M(L)$ is a spectral value of $D-\xi T$, and thus \(M(L)\le M(D-\xi T)\), which together with the inequality in \ref{line: meas Kol 1} guarantees the equality \(M(D-\xi T)=M(L)\).

    \ref{line: meas Kol 3}
    Assume $M(D-\xi T)=M(L)$ for $\xi=\frac 1{k-1}$. Then, let $f_n$ be a sequence of approximate eigenvectors of $L$ associated with $M(L)$ and \(g_n^{(i)} = f_n +\alpha \ind_i f_n\) where $\alpha=-k$.
    From the proof of \ref{line: meas Kol 2} we know that $\left(g_n^{(i)}\right)_{n\in \N}$ is a sequence of approximate eigenvectors of $L$ associated with $M(L)$ for each $i \in \en{k}$, i.e.,
        \[\lim_{n\to \infty} \norm{Lg_n^{(i)} -M(L)g_n^{(i)}}=0.\]
    Then, for any $j\ne i$ and $n\in \N$ notice that
        \[\left(\ind_i-\ind_j\right){f_n} =\frac{g_n^{(j)}-g_n^{(i)}}{k},\]
    implying that \(\left(\left(\ind_i-\ind_j\right){f_n}\right)_{n\in \N}\) must also be a sequence of approximate eigenvectors of $L$ associated with $M(L)$. So $(f_n)$ satisfies the desired property from \ref{line: meas Kol 3}.

    Conversely, assume there exists a sequence of unit vectors $(h_n)_{n\in \N}$ and some $i\in \en{k}$ such that for any $j\ne i$, \(\left((\ind_i-\ind_j)h_n\right)_{n\in \N}\) is a sequence of approximate eigenvectors of $L$ associated with $M(L)$.
    Similar to the proof of \ref{line: meas Kol 2}, in order to prove $M\!\left(D+\frac 1{k-1} T\right)=M(L)$, it suffices to show $M(L)$ is in the spectrum of $D+\frac 1{k-1} T$. We will do so by proving $(h_n)_{n\in\N}$ is a sequence of approximate eigenvectors of \(D+\frac 1{k-1} T\) associated with $M(L)$, i.e.,
        \[\lim_{n\to \infty} \norm{\left(D+\frac 1{k-1} T\right){h_n} -M(L){h_n}}=0.\]
    To establish this, we note that it is sufficient to show the norm in the ${X_l}^{\text{th}}$ component goes to zero for each $l\in \en{k}$, i.e.,
        \[\lim_{n\to \infty} \norm{\left(D+\frac 1{k-1} T\right){h_n} -M(L){h_n}}_{X_l}=0,\]
    where \({\norm{f}_{X_l}}\) denotes \(\left(\int_{X_l} \abs{f}^2\ d\mu\right)^\frac 12\).
    
    For any $j \ne i$, we know that
    \begin{equation}
        \lim_{n\to \infty} \norm{L( \left(\ind_i - \ind_j\right) h_n) 
        -M(L) \left( \ind_i - \ind_j \right) h_n}=0.\label{eq: 1i-1j approx eigvecs}
    \end{equation}
    Considering just the block components in \eqref{eq: 1i-1j approx eigvecs} with output in $L^2(X_i)$, we get
    \[
        \lim_{n\to \infty} \norm{
             L_{ii} \ind_i h_n
            - L_{ij} \ind_j h_n
            -M(L)\ind_i{h_n}}=0
    \]
    for all $j\ne i$. Therefore
    \begin{align*}
        &\lim_{n\to \infty}\norm{\left(D+\frac 1{k-1}T\right)h_n-M(L)h_n}_{X_i}\\[4pt]
        &=\lim_{n\to \infty}
        \norm{L_{ii}\ind_ih_n - \frac 1{k-1} \sum_{j\ne i} L_{ij}\ind_jh_n -M(L)\ind_ih_n}\\[4pt]
        &\le\lim_{n\to \infty}
        \frac 1{k-1}
        \sum_{j\ne i} 
        \norm{L_{ii}\ind_ih_n - L_{ij}\ind_jh_n - M(L)\ind_ih_n}
        = 0.
    \end{align*}

    Next, we consider the block components in \eqref{eq: 1i-1j approx eigvecs} with output in $L^2(X_l)$ for $l\ne i$.
    First, putting $l=j$, we find that
        \[\lim_{n\to \infty} \norm{
            L_{li}(\ind_ih_n)
            - L_{ll}(\ind_lh_n)
            +M(L)\ind_l{h_n}}=0.\]
    Next, for $l\neq j$, we get
        \[\lim_{n\to \infty} \norm{
            L_{li}(\ind_ih_n)
            - L_{lj}(\ind_jh_n)}=0.\]
    Combining these and using the triangle inequality, we get that for all $l\ne i$ and $j\neq l$:
        \[\lim_{n\to \infty}\norm{L_{ll} \ind_l h_n - L_{lj}\ind_jh_n - M(L) \ind_l h_n}=0.\]
    Therefore,
    \begin{align*}
        &\lim_{n\to \infty}\norm{\left(D + \frac 1{k-1}T\right)h_n-M(L)h_n}_{X_l}\\[4pt]
        &=\lim_{n\to \infty}
        \norm{L_{ll} \ind_l h_n - \frac 1{k-1} \sum_{j\ne l} L_{lj} \ind_j h_n -M(L) \ind_l h_n}\\[4pt]
        &\le\lim_{n\to \infty}
        \frac 1{k-1}
        \sum_{j\ne l} 
        \norm{L_{ll} \ind_l h_n - L_{lj}\ind_jh_n - M(L) \ind_l h_n}
        =0
    \end{align*}
    for any $l\ne i$, as desired.
\end{proof}

We remark that a somewhat more general result in fact holds; see Subsection~\ref{subsec: further}.

\subsection{Proof of Theorem~\ref{thm: approx m-nik}}
\label{subsec: theorem a}

In this section, we turn to the proof of our main Theorem~\ref{thm: approx m-nik}. First, we deduce the following useful lemma from our technical Theorem~\ref{thm: meas Kol}.

\begin{lem}\label{cor: m-nik 1}
    Let \((X,\mu)\) be a measure space and $T : L^2(X) \to L^2(X)$ be a bounded self-adjoint operator. 
    Suppose there exists a measurable partition \(X=\bigsqcup _{i=1}^k X_i\) such that the block decomposition of $T$ with respect to $L^2(X)=\bigoplus_{i=1}^kL^2(X_i)$ has $0$ in every diagonal entry.
    Then, for any real bounded multiplication operator \(D\colon L^2(X)\to L^2(X)\),
    \[M(D-T) \ge M\!\left(D+\frac 1{k-1} T\right).\]
\end{lem}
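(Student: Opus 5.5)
We will apply Theorem~\ref{thm: meas Kol}\ref{line: meas Kol 1} to the operator $L := D - T$, with the given partition $X=\bigsqcup_{i=1}^k X_i$ and the parameter $\xi = -\frac{1}{k-1}$. This $\xi$ is the left endpoint of the admissible interval, so the theorem applies.

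First, $L$ is bounded and self-adjoint. Indeed, $D$ is a real bounded multiplication operator, hence bounded and self-adjoint, and $T$ is bounded and self-adjoint by hypothesis.

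Next, we identify the block diagonal part of $L$ with respect to $L^2(X)=\bigoplus_{i=1}^k L^2(X_i)$. Since $D$ is a multiplication operator, it maps $L^2(X_i)$ into $L^2(X_i)$, because $D(\ind_i f)=\ind_i Df$. Hence its block decomposition is diagonal: $D_{ii}$ is the restriction of $D$ to $L^2(X_i)$, and $D_{ij}=0$ for $i\ne j$. By hypothesis $T_{ii}=0$ for every $i$. Therefore
\[L_{ii} = D_{ii} - T_{ii} = D_{ii}\quad\text{and}\quad L_{ij} = -T_{ij}\quad (i\neq j).\]
So the block diagonal part of $L$, which Theorem~\ref{thm: meas Kol} calls $D$, is exactly $\operatorname{Diag}(D_{11},\dots,D_{kk})=D$. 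The operator the theorem calls $T$ is then $D - L = T$. The notation of the theorem thus matches ours exactly.

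Theorem~\ref{thm: meas Kol}\ref{line: meas Kol 1} with $\xi=-\frac1{k-1}$ now gives
\[M(D-T)=M(L)\ \ge\ M\!\left(D - \left(-\tfrac{1}{k-1}\right)T\right)=M\!\left(D+\tfrac1{k-1}T\right),\]
which is the claim.

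There is no real obstacle in this argument. The only point needing care is checking that the multiplication operator $D$ is block diagonal, so that the diagonal part of $D-T$ is precisely $D$. This uses the hypothesis that $T$ has zero diagonal blocks, together with the fact that multiplication operators commute with $\ind_i$. One should also note that $k\ge 2$ is needed for $\frac1{k-1}$ to make sense. In the degenerate case $k=1$ we have $T=T_{11}=0$, so both sides equal $M(D)$ and the statement holds trivially under any reasonable reading.
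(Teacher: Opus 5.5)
Your proposal is correct and is essentially the paper's proof: set $L = D - T$, observe that its block diagonal part is exactly $D$ (since $D$ is a multiplication operator and $T$ has zero diagonal blocks), and apply Theorem~\ref{thm: meas Kol}\ref{line: meas Kol 1} at the endpoint $\xi = -\frac{1}{k-1}$. You also spell out the block-diagonality of $D$, which the paper leaves implicit, and your final display correctly has $\frac{1}{k-1}$ where the paper's proof contains the typo $\frac{1}{k+1}$.
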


\begin{proof}
    Set $L = D - T$. Note that, since the block decomposition of $T$ with respect to $\bigoplus_{i = 1}^k L^2(X_i)$ has $0$ in each diagonal entry, the block diagonal part of the block decomposition of $L$ with respect to $\bigoplus_{i = 1}^k L^2(X_i)$ is exactly $D$. Hence, by Theorem~\ref{thm: meas Kol}\ref{line: meas Kol 1}, taking $\xi = -\frac{1}{k-1}$, we obtain
    $$M(D-T) \geq M\!\left(D + \frac{1}{k + 1}T\right),$$
    as desired.
\end{proof}

The following observation regarding positive operators is also used in the proof.

\begin{lem} \label{lem: m-nik 2}
    Let $D$ and $T$ be bounded self-adjoint operators such that $D-T$ is positive. Then, for any $k>1$,
        \[M\!\left( D + \frac 1{k-1} T \right)
        \ge \frac k{k-1} M(T).\]
\end{lem}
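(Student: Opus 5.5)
The plan is to write $D + \frac{1}{k-1}T$ as the sum of the positive operator $D - T$ and a multiple of $T$, and then use monotonicity of $M$ under adding a positive operator. Concretely, we have the identity
\[
D + \frac{1}{k-1}T \;=\; (D - T) + \left(1 + \frac{1}{k-1}\right)T \;=\; (D-T) + \frac{k}{k-1}\,T .
\]

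Next, we use the variational characterization $M(S) = \sup_{\norm{f}=1}\ang{Sf,f}$ recalled in the preliminaries for bounded self-adjoint $S$. For any unit vector $f$, positivity of $D-T$ gives $\ang{(D-T)f,f}\ge 0$. Therefore
\[
\ang{\left(D + \tfrac{1}{k-1}T\right)f, f} \;\ge\; \frac{k}{k-1}\ang{Tf,f}.
\]

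We now take the supremum over unit $f$ on both sides. The left side is at most $M(D+\frac1{k-1}T)$. On the right, the factor $\frac{k}{k-1}$ is positive because $k>1$, so the supremum of the right side equals $\frac{k}{k-1}M(T)$. This yields the claim.

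One can also argue concretely: choose unit vectors $f_n$ with $\ang{Tf_n,f_n}\to M(T)$ and apply the displayed inequality to each $f_n$. There is no real obstacle here. The only points to check are that $D+\frac1{k-1}T$ is self-adjoint, so that $M$ is given by the Rayleigh quotient supremum, and that the coefficient $\frac{k}{k-1}$ is positive, so the inequality does not flip.
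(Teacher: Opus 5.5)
Your proposal is correct and follows essentially the same argument as the paper. Both write $D+\frac{1}{k-1}T=(D-T)+\frac{k}{k-1}T$, drop the positive term $\langle (D-T)f,f\rangle\ge 0$, and pass to the supremum (the paper does this via unit vectors $f$ with $\langle Tf,f\rangle\ge M(T)-\eps$).
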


\begin{proof}
    Fix $\eps>0$. There exists a unit vector $f\in H$ such that 
    \(\ang{T f, f}\ge M(T)-\eps\). Then,
    \begin{align*}
        M\!\left(D+\frac 1{k-1}T \right)
        & \ge \ang{\left(D + \frac 1{k-1} T\right)f, f} \\[4pt]
        & =\ang{\left(D-T \right) f,f}
            +\frac {k}{k-1}\ang{T 
            f,f}\\[4pt]
        & \ge \frac {k}{k-1} \ang{T  f,f}\\
        & \ge\frac {k}{k-1} \left(M(T)-\eps\right),
    \end{align*}
    where the penultimate inequality derives from the positivity of $D-T$.
    Taking the limit $\eps\to 0$, we get the desired inequality.
\end{proof}

The following corollary is an easy consequence of the fact that the Laplacian operator $L_\G = D_\G - T_\G$ is positive.

\begin{cor} \label{cor: m-nik 2 graphs}
    Let $\G$ be a bounded-degree Borel pmp graph on \((X,\mu)\). Then, for any $k>1$:
    \[
    M\!\left(D_\G + \frac 1{k-1} T_\G \right)
    \ge \frac k{k-1} M(T_\G).
    \]
\end{cor}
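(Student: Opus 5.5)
This is a direct application of Lemma~\ref{lem: m-nik 2} with $D = D_\G$ and $T = T_\G$. The plan is to check the hypotheses of that lemma and then read off the inequality.

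First, for a bounded-degree Borel pmp graph $\G$ on $(X,\mu)$, both $D_\G$ and $T_\G$ are bounded self-adjoint operators on $L^2(X)$. For $D_\G$ this is immediate: it is multiplication by the bounded real function $\deg$, which is bounded by $\Delta$. For $T_\G$ it was recorded in Subsection~\ref{ssec:op}. The self-adjointness of $T_\G$ is the only place the pmp assumption enters; it comes from the mass transport principle~\eqref{eq:masstransport}, and we cite it from \cite{HST2026} rather than reprove it.

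Second, we need $D_\G - T_\G = L_\G$ to be positive. This follows from the identity
\[
\ang{L_\G f, f} = \frac12 \int_X \sum_{y\sim x} \abs{f(x)-f(y)}^2 \dmu(x) \ge 0,
\]
stated in Subsection~\ref{ssec:op} (see also \cite[Lemma~3.11]{HST2026}).

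With both hypotheses verified, Lemma~\ref{lem: m-nik 2} applied to $D = D_\G$ and $T = T_\G$ gives, for every $k>1$,
\[
M\!\left(D_\G + \frac{1}{k-1} T_\G\right) \ge \frac{k}{k-1} M(T_\G),
\]
which is exactly the claim. There is no real obstacle: the only substantive inputs are the positivity of the Laplacian and the self-adjointness of $T_\G$, and both are already available from earlier in the paper.
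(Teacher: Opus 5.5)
Your proposal is correct and is the paper's argument: the corollary comes directly from Lemma~\ref{lem: m-nik 2} with $D = D_\G$ and $T = T_\G$, using positivity of $L_\G = D_\G - T_\G$. One small quibble: self-adjointness of $T_\G$ is not the only place the pmp assumption enters, since mass transport is also what makes $T_\G$ bounded (indeed well defined on $L^2$) and underlies the identity for $\ang{L_\G f,f}$, but this does not affect your argument.
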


We can now turn to the proof of Theorem~\ref{thm: approx m-nik}.

\begin{proof}[Proof of Theorem~\ref{thm: approx m-nik}]
    Let $k = \chi_\mu^{\ap}(\G)$. Then for any \(\eps>0\), there exists a measurable subset $B^\eps\sub X$ with $\mu(B^\eps)<\eps$ and a measurable $k$-coloring \(c_\eps\colon X\backslash B^\eps\to \{1,\dots k\}\). Let \(A^\eps=X\backslash B^\eps\). Then, consider the block decompositions of $T_\G, D_\G$, and \(L_\G\) with respect to the direct sum 
    \(L^2(X)= L^2(A^\eps) \oplus L^2(B^\eps)\):
    
        \[T_\G =
        \begin{bmatrix}
        T^\eps_{A} & T^\eps_{12}\\[4pt]
        T^\eps_{21} & T^\eps_{B}
        \end{bmatrix},\
        D_\G =
        \begin{bmatrix}
        D^\eps_{A} & D^\eps_{12}\\[4pt]
        D^\eps_{21} & D^\eps_{B}
        \end{bmatrix},\
        L_\G =
        \begin{bmatrix}
        L^\eps_{A} & L^\eps_{12}\\[4pt]
        L^\eps_{21} & L^\eps_{B}
        \end{bmatrix}.\]
    By Lemma~\ref{lem: zero-diagonal}, the diagonal blocks of the block decomposition of $T_A^\eps$ with respect to 
    \(L^2(A^\eps)=\bigoplus_{i=1}^k L^2(X_i^\eps)\), where \(X_i^\eps={c_\eps}^{-1}(i)\) are the color classes of $c_\eps$, are zero. 
    Furthermore, note that $D_A^\eps$ is a bounded multiplication operator, and $D_A^\eps-T_A^\eps = L_A^\eps$ is positive, as it is a diagonal block component of a positive operator. 
    Hence, applying Lemma~\ref{cor: m-nik 1} and Lemma~\ref{lem: m-nik 2}, we get that for any \(\eps>0\),
        \begin{equation}\label{eq:Aineqs}
        M(D_A^\eps - T_A^\eps) 
        \ge M\!\left(D_A^\eps + \frac 1{k-1} T_A^\eps\right) 
        \ge \frac k{k-1} M(T_A^\eps)
        \end{equation}
    and thus
        \[0\ge k M(T_A^\eps) 
        - (k-1) M(L_A^\eps).\]
    Applying Lemma~\ref{lem: approx M} to $T_\G$ and $L_\G$, we get
        \[k M(T_\G) - (k-1) M(L_\G)
        = \lim_{\eps\to 0}
            \left( k M(T_A^\eps) 
            - (k-1) M(L_A^\eps\right))
        \le 0.\]
    Since $k$ is an integer, we conclude that
        \[ k\ge \ceil{\frac 
        {M(L_\G)}
        {M(L_\G) -M(T_\G)}} = \ceil{1+\frac 
        {M(T_\G)}
        {M(L_\G) -M(T_\G)}},\]
    as desired.
\end{proof}

The next result is immediate from the fact that $\chi_\mu(\G) \geq \chi_{\mu}^{\ap}(\G)$.

\begin{cor}\label{cor: meas nik}
    In the setting of Theorem~\ref{thm: approx m-nik}, 
    $$\chi_{\mu}(\G) \geq \ceil{1+\frac 
        {M(T_\G)}
        {M(L_\G) -M(T_\G)}}.$$
\end{cor}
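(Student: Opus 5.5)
The plan is to deduce this directly from Theorem~\ref{thm: approx m-nik} together with the general inequality $\chi_\mu^{\ap}(\G) \le \chi_\mu(\G)$ recorded in Section~\ref{sec: prelim}. First I will verify that inequality: if $c$ is a measurable $k$-coloring of $\G$, then for every $\eps>0$ we may take $A = X$. Then $\mu(A) = 1 > 1-\eps$, and $\G[A] = \G$ has the measurable $k$-coloring $c$. So every $k$ witnessing the definition of $\chi_\mu(\G)$ also witnesses the definition of $\chi_\mu^{\ap}(\G)$, and taking least such $k$ gives $\chi_\mu^{\ap}(\G)\le \chi_\mu(\G)$.

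Next I will chain the inequalities. The hypotheses here are exactly those of Theorem~\ref{thm: approx m-nik}, so that theorem gives
\[
\chi_\mu(\G) \ \ge\ \chi_\mu^{\ap}(\G) \ \ge\ \ceil{1+\frac{M(T_\G)}{M(L_\G)-M(T_\G)}}.
\]

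The only subtlety is the degenerate case where $\chi_\mu(\G)$ does not exist, meaning there is no finite measurable coloring. In that case I interpret the left side as $\infty$, and the inequality holds trivially. If instead $\chi_\mu(\G)$ is finite, then $\chi_\mu^{\ap}(\G)$ is finite by the argument above, so Theorem~\ref{thm: approx m-nik} genuinely applies.

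I will also note that the right-hand side is well-defined whenever a finite coloring exists. The proof of Theorem~\ref{thm: approx m-nik} already shows $kM(T_\G)\le (k-1)M(L_\G)$, so the denominator issue is handled there and nothing further is needed. I expect no real obstacle; the whole content lies in the earlier theorem.
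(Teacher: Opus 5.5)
Your proposal is correct and follows exactly the paper's argument: the corollary is immediate from Theorem~\ref{thm: approx m-nik} together with $\chi_\mu^{\ap}(\G)\le\chi_\mu(\G)$, which you verify by taking $A=X$. Your treatment of the degenerate case is harmless but unnecessary, since a bounded-degree Borel graph always has a Borel, hence measurable, $(\Delta+1)$-coloring, so $\chi_\mu(\G)$ is always finite.
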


\subsection{Characterizations of Equality}
Using Theorem~\ref{thm: meas Kol}, we can in fact characterize when many of the inequalities in the results of Subsection~\ref{subsec: theorem a} are equalities.

We begin by giving a necessary and sufficient condition for which equality holds in Corollary~\ref{cor: m-nik 2 graphs}. 

\begin{lem} \label{lem: m-nik 2 equality}
    Let $\G$ be a bounded-degree Borel pmp graph on \((X,\mu)\). If $\G$ is regular, then in the inequality
    \begin{equation}\label{eq: m-nik 2 equality case}
    M\!\left(D_\G + \frac 1{k-1} T_\G \right) \ge \frac k{k-1} M(T_\G)
    \end{equation}
    from Corollary~\ref{cor: m-nik 2 graphs}, equality holds for any $k>1$.
    Conversely, if we assume $\G$ is ergodic and \(M(T_\G)\) is in the discrete spectrum of $T_\G$, then for any $k>0$, equality in \eqref{eq: m-nik 2 equality case} implies $\G$ is $M(T_\G)$-regular.
\end{lem}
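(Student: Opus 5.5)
For the forward direction, suppose $\G$ is $d$-regular. Then $D_\G = dI$ and $M(T_\G)=d$; the latter follows from $\norm{T_\G}\le d$ together with $T_\G\ind = d\ind$. Hence
\[
M\!\left(D_\G+\tfrac1{k-1}T_\G\right)=d+\tfrac1{k-1}M(T_\G)=\tfrac{k}{k-1}\,d,
\]
because adding a scalar multiple of the identity shifts the spectrum, and multiplying by the positive scalar $\tfrac1{k-1}$ preserves maxima. This is exactly the right-hand side of \eqref{eq: m-nik 2 equality case}.

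For the converse, let $h\neq0$ be an eigenvector with $T_\G h=M(T_\G)h$, and write $M=M(T_\G)$. First I show that $\abs{h}$ is also an eigenvector. We have $M\norm{h}^2=\ang{T_\G h,h}\le\ang{T_\G\abs h,\abs h}\le M\norm h^2$, so $\abs h$ maximizes the Rayleigh quotient. This forces $\norm{T_\G\abs h-M\abs h}^2\le \norm{T_\G}^2\norm h^2-2M\ang{T_\G\abs h,\abs h}+M^2\norm h^2=0$, using $\norm{T_\G}=M$ as in Lemma~\ref{lem: -M and abs eig'fns}. Replace $h$ by $\abs h\ge0$. Next I show $h>0$ almost everywhere. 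The zero set $Z=\{h=0\}$ satisfies $\sum_{y\sim x}h(y)=0$ for $x\in Z$, so every neighbor of a point of $Z$ lies in $Z$ (up to null sets). Thus $Z$ is essentially invariant, and ergodicity together with $h\ne0$ gives $\mu(Z)=0$.

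Now I exploit equality. Evaluating the quadratic form at $h$ gives
\[
\ang{(D_\G+\tfrac1{k-1}T_\G)h,h}=\ang{(D_\G-T_\G)h,h}+\tfrac{k}{k-1}M\norm h^2 .
\]
If equality holds in \eqref{eq: m-nik 2 equality case}, the left side is at most $\tfrac{k}{k-1}M\norm h^2$, so $\ang{L_\G h,h}=0$. By Lemma~\ref{lem: ergodic lap. quadratic form = 0}, $h$ is constant on a conull invariant set, say $h=c\ind$ with $c>0$. Then $T_\G\ind=M\ind$, meaning $\deg(x)=M$ almost everywhere, so $\G$ is $M(T_\G)$-regular.

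The step needing the most care is the reduction to a nonnegative (and then positive) eigenvector via the $\abs{h}$ argument and ergodicity. Strictly speaking, positivity is not even needed once Lemma~\ref{lem: ergodic lap. quadratic form = 0} yields that $h$ is constant, since $h\neq0$. It suffices that equality forces $\ang{L_\G h,h}=0$ for some true eigenvector $h$; that is exactly where the discrete spectrum hypothesis enters. The hypothesis $k>0$ in the statement should be read as $k>1$, so that $\tfrac1{k-1}$ is defined and positive.
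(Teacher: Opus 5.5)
Your proof is correct and follows the paper's argument. The forward direction is the same spectral shift. For the converse, the paper likewise evaluates the quadratic form of $D_\G+\frac{1}{k-1}T_\G$ on a true eigenvector for $M(T_\G)$ to force $\ang{L_\G h,h}=0$, then applies Lemma~\ref{lem: ergodic lap. quadratic form = 0} and reads off $\deg(x)=M(T_\G)$. As you note, the detour through $\abs{h}$ and strict positivity is unnecessary, and the hypothesis should read $k>1$, which is what the paper's proof actually uses.
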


\begin{proof}
    If $\G$ is $d$-regular, then $M(T_\G)=d$. Moreover, the degree operator is simply given by $D_\G = d\cdot I$ so the spectrum of \(D_\G+\frac 1{k-1}T_\G\) is simply the spectrum of $\frac 1{k-1}T_\G$ shifted by $d$. So for any $k>1$ we have
    \[M\!\left(d\cdot I+\frac 1{k-1}T_\G \right)
    =d+\frac 1{k-1} MT_\G)
    =d+\frac 1{k-1} d
    =\frac k{k-1}d\]
    as desired.
    
    Conversely, suppose we have equality in \eqref{eq: m-nik 2 equality case} for some $k>1$. Further assume that $\G$ is ergodic and \(M(T_\G)\) is in the discrete spectrum of $T_\G$, i.e., there is some unit eigenvector \(f\in L^2(X)\) of $T_\G$ associated with \(M(T_\G)\). Then:
    \begin{align*}
        M\!\left(
        D_\G + \frac 1{k-1}T_\G 
        \right)
        &\ge
        \ang{\left(
        D_\G + \frac 1{k-1}T_\G 
        \right)f,f}\\[4pt]
        & =\ang{\left(D_\G - T_\G\right) f,f}
            +\frac {k}{k-1}\ang{T_\G  f,f}\\[4pt]
        &\ge \frac {k}{k-1}\ang{T_\G  f,f}
        = \frac {k}{k-1}M(T_\G).
    \end{align*}
    Since we have equality in \eqref{eq: m-nik 2 equality case}, this forces \(\ang{L_\G f,f} = \ang{\left(D_\G - T_\G \right)f,f}=0\).
    Since $\G$ is ergodic, using Lemma~\ref{lem: ergodic lap. quadratic form = 0}, we see that this implies that $f$ is constant on some invariant conull set \(X_f\sub X\).
    Hence, without loss of generality, $f=1$ on \(X_f\), which implies that for all \(x\in X_f\), we have \((T_\G  f)(x) =M(T_\G) f(x)\). Therefore
    \[
        \sum_{y\sim x} f(y)
        =M(T_\G).         
    \] 
    Recall that \(X_f\) is invariant, so any $y$ adjacent to $x$ is also in \(X_f\), implying \(f(y)=1\) as well, so
    \[\deg(x)=M(T_\G),\]
    i.e., $\G$ is $d$-regular on \(X_f\), with \(d=M(T_\G)\).
\end{proof}

Next, we turn to the equality cases for Corollaries~\ref{cor: m-nik 1} and~\ref{cor: meas nik}. First, it is immediate from Theorem~\ref{thm: meas Kol}\ref{line: meas Kol 3} that the inequality in Corollary~\ref{cor: m-nik 1} is an equality, i.e.,
    \[M(L)=M\!\left(D+\frac 1{k-1} T\right),\]
if and only if there are some sequence $(h_n)_{n\in \N}$ in $L^2(X)$ and some \(i\in \en{k}\) such that
    \begin{equation}
    \lim_{n\to \infty}
    \frac{\ang{L
        (\left(\ind_i-\ind_j\right) h_n),
        \left(\ind_i-\ind_j\right) h_n
        }}
        {\norm{\left(\ind_i-\ind_j\right) h_n}^2}
    =M(L) \label{eq: equality condition}
    \end{equation}
for each $j\ne i$.

Finally, we observe that this equality criterion, in conjunction with Lemma~\ref{lem: m-nik 2 equality}, characterizes when equality holds in the inequality
\begin{equation}\label{eq:chimuineq}
\chi_{\mu}(\G) \geq \ceil{1+\frac 
        {M(T_\G)}
        {M(L_\G) -M(T_\G)}}
\end{equation}
from Corollary~\ref{cor: meas nik}.

\begin{cor}\label{cor: meas char}
    Suppose $\G$ is a bounded-degree Borel pmp graph on $(X,\mu)$ with measurable chromatic number $k=\chi_\mu(\G)$ and Laplacian operator $L=L_\G$. 
    
    If $\G$ is regular and there exist some sequence $(h_n)_{n\in \N}$ in $L^2(X)$ and some $i\in \en{k}$ which satisfy \eqref{eq: equality condition} for all $j\ne i$, then equality holds in \eqref{eq:chimuineq}.

    Conversely, assume $\G$ is ergodic and $M(T_\G)$ is in the discrete spectrum of $T_\G$. Then if equality holds in \eqref{eq:chimuineq}, then $\G$ is $M(T_\G)$-regular and there exist some sequence $(h_n)_{n\in \N}$ in $L^2(X)$ and some $i\in \en{k}$ which satisfy \eqref{eq: equality condition} for all $j\ne i$.
\end{cor}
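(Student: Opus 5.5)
The plan is to track the chain of inequalities from the proof of Theorem~\ref{thm: approx m-nik}, specialized to an honest measurable $k$-coloring, so that $A^\eps = X$ and no limiting argument via Lemma~\ref{lem: approx M} is needed. Fix a measurable $k$-coloring with color classes $X_1,\dots,X_k$. By Lemma~\ref{lem: zero-diagonal} and Lemma~\ref{cor: m-nik 1}, together with Corollary~\ref{cor: m-nik 2 graphs}, we have
\begin{equation*}
M(L_\G) \ge M\!\left(D_\G + \tfrac{1}{k-1}T_\G\right) \ge \tfrac{k}{k-1}M(T_\G).
\end{equation*}
This is equivalent to $k \ge 1 + \frac{M(T_\G)}{M(L_\G)-M(T_\G)}$. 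I will read ``equality in \eqref{eq:chimuineq}'' at the level of this unrounded quantity, i.e.\ $k = M(L_\G)/(M(L_\G)-M(T_\G))$, since that is what makes the two inequalities in the chain tight simultaneously. Equivalently, equality means $(k-1)M(L_\G) = kM(T_\G)$, which holds if and only if both inequalities in the display are equalities.

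For the forward direction, assume $\G$ is regular and that $(h_n)$ and $i$ satisfy \eqref{eq: equality condition} for all $j\ne i$. By the remark following Lemma~\ref{lem: m-nik 2 equality}, this is exactly the criterion of Theorem~\ref{thm: meas Kol}\ref{line: meas Kol 3}. To apply it, I first need to convert \eqref{eq: equality condition}, which is a statement about Rayleigh quotients, into an approximate eigenvector statement. Normalizing $(\ind_i-\ind_j)h_n$ and using the argument from Lemma~\ref{lem: -M and abs eig'fns}, Rayleigh quotients tending to $M(L)$ force approximate eigenvectors. I also have to make sure the same normalization works for every $j$ at once. With that in hand, the first inequality is an equality. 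Regularity gives equality in the second inequality by Lemma~\ref{lem: m-nik 2 equality}. Hence $(k-1)M(L_\G)=kM(T_\G)$, and equality holds in \eqref{eq:chimuineq}. (The case $k=2$ has to be handled separately, since Theorem~\ref{thm: meas Kol}\ref{line: meas Kol 3} requires $k\ge 3$; there one uses $\xi=-1$ directly, and the same computation goes through.)

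For the converse, assume $\G$ is ergodic, $M(T_\G)$ is an eigenvalue, and equality holds. Then $M(L_\G) = \tfrac{k}{k-1}M(T_\G)$, and the chain forces both inequalities to be equalities. Equality in the second one, via the converse half of Lemma~\ref{lem: m-nik 2 equality}, gives that $\G$ is $M(T_\G)$-regular. Equality in the first one, via Theorem~\ref{thm: meas Kol}\ref{line: meas Kol 3} applied to $L=L_\G$ with the partition $\{X_i\}$, yields $(h_n)$ and $i$ such that $(\ind_i-\ind_j)h_n$ are approximate eigenvectors for $M(L_\G)$. Their Rayleigh quotients then tend to $M(L_\G)$, which is \eqref{eq: equality condition}, provided the norms $\norm{(\ind_i-\ind_j)h_n}$ stay bounded below.

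I expect the main obstacle to be this nondegeneracy. The proof of Theorem~\ref{thm: meas Kol}\ref{line: meas Kol 3} produces $(\ind_i-\ind_j)f_n=(g^{(j)}_n-g^{(i)}_n)/k$, and these could a priori tend to $0$. My first attempt is to pass to a subsequence and use a pigeonhole argument, as in part \ref{line: meas Kol 2}, to choose an $i$ with $\norm{\ind_i f_n}\ge 1/k$; this gives $\norm{(\ind_i-\ind_j)f_n}\ge 1/k$ for every $j$. A secondary concern is whether ``equality'' after the ceiling can be reduced to equality of the unrounded quantity. If the intended statement really is about the ceiling, then the converse direction needs the bound to be an integer, and I would state that interpretation explicitly.
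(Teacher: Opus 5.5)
Your proposal is correct and follows the paper's own proof. You fix a measurable $k$-coloring, observe that (unrounded) equality in \eqref{eq:chimuineq} is the same as tightness of both inequalities in $M(L_\G)\ge M(D_\G+\frac{1}{k-1}T_\G)\ge\frac{k}{k-1}M(T_\G)$, and then invoke Lemma~\ref{lem: m-nik 2 equality} for the second inequality and Theorem~\ref{thm: meas Kol}\ref{line: meas Kol 3} for the first.

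The points you flag are ones the paper passes over, and you handle them soundly:
\begin{enumerate-(1)}
\item The converse genuinely needs the unrounded reading. The path on three vertices with uniform measure attains the ceiling without being regular.
\item Rescaling $h_n$ itself to norm one gives $\norm{(\ind_i-\ind_j)h_n}\le 1$ for all $j$ at once.
\item Your pigeonhole choice of $i$ supplies the needed lower bound on norms. It is legitimate because the proof of part \ref{line: meas Kol 3} yields approximate eigenvectors $(\ind_i-\ind_j)f_n$ for every pair $i\ne j$.
\item For $k=2$, the argument of part \ref{line: meas Kol 3} goes through verbatim.
\end{enumerate-(1)}
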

\begin{proof}
    Let $c\colon X\to \{1,\dots,k\}$ be a measurable coloring of $\G$ with color classes $X_i=c^{-1}(i)$. Let $\xi=-\frac 1{k-1}$. By Lemma~\ref{lem: zero-diagonal}, the diagonal block operator \(D = Diag(L_{11},\dots,L_{kk})\) of $L_\G$ is simply the degree operator $D_\G$. We are thus in a position to apply Theorem~\ref{thm: meas Kol} to $L_\G$, $D_\G$, and $T_\G$.
    
    First, if $\G$ is regular and there exist some sequence $(h_n)_{n\in \N}$ in $L^2(X)$ and some $i\in \en{k}$ which satisfy \eqref{eq: equality condition} for all $j\ne i$, then it follows immediately from Lemma~\ref{lem: m-nik 2 equality} and Theorem~\ref{thm: meas Kol}\ref{line: meas Kol 3}, that
    \[
    M\!\left(D_\G + \frac 1{k-1} T_\G \right) = \frac k{k-1} M(T_\G),
    \]
    and
    \[
    M(L_\G) = M\!\left(D_\G + \frac 1{k-1} T_\G\right).
    \]
    Combining these, we immediately get
    \[\chi_\mu(\G) = k = \ceil {
        1+\frac{M(T_\G)}{M(L_\G)-M(T_\G)}
    }.\]
    
    Conversely, assume we have the equality for an ergodic $\G$ with $M(T_\G)$ in the discrete spectrum of $T_\G$. Then all inequalities in equation~\eqref{eq:Aineqs} (without $\eps$'s) in the proof of Theorem~\ref{thm: approx m-nik} are equalities. In particular, it follows that 
    \[
    M\!\left(D_\G + \frac 1{k-1} T_\G \right) = \frac k{k-1} M(T_\G),
    \]
    and
    \[
    M(L_\G) = M\!\left(D_\G + \frac 1{k-1} T_\G\right).
    \]
    Thus, by Lemma~\ref{lem: m-nik 2 equality} and Theorem~\ref{thm: meas Kol}\ref{line: meas Kol 3}, we immediately get that $\G$ is $M(T_\G)$-regular and there exist some sequence $(h_n)_{n\in \N}$ in $L^2(X)$ and some $i\in \en{k}$ which satisfy \eqref{eq: equality condition} for all $j\ne i$. 
\end{proof}

Note that since $\chi_\mu^\ap(\G)\le \chi_\mu(\G)$, the conditions from Corollary~\ref{cor: meas char} are sufficient, but perhaps not necessary, to also show 
    \[\chi_\mu^\ap(\G)=
    \ceil{1+ \frac{M(T_\G)} {M(L_\G)-M(T_\G)}}.\]
    
\subsection{Further Results} 
\label{subsec: further}
As mentioned earlier, using Theorem~\ref{thm: meas Kol}\ref{line: meas Kol 1}, we can in fact obtain a more general result, which yields the same inequality as in Theorem~\ref{thm: meas Kol}\ref{line: meas Kol 1} but allows, for each $\eps > 0$, for the measurable $k$-partition of $X$ to disinclude an error set of measure less than $\eps$. Note that condition~(ii) below is somewhat restrictive, but it is in fact satisfied when $D$ is diagonal and $T$ has zero diagonal blocks with respect to the given partitions; in particular, this holds for $D_\G$ and $T_\G$ when the partitions come from approximate measurable colorings.

\begin{cor}\label{approx meas Kol}
    Let $L, D, T\colon L^2(X)\to L^2(X)$ be bounded self-adjoint operators with $T=D-L$. 
    Suppose that there exists some fixed $k$ such that for any $\eps>0$ there exists a measurable partition $X = \bigsqcup_{i=1}^{k+1} X_i$ for which the following hold:
    \begin{enumerate-(i)}
        \item $\mu(X_{k+1})<\eps$;
        \item $D$ is the block diagonal part of $L$ in the first $k$ blocks. That is, letting $A=\bigsqcup_{i=1}^k X_i$,
        \[D|_{A} = Diag\left(L_{11},\dots,L_{kk}\right),\]
        where \(L=(L_{ij})_{i,j=1}^{k+1}\) is the block decomposition of $L$ according to the direct sum \(L^2(X)=\bigoplus_{i=1}^{k+1} L^2(X_i)\).
    \end{enumerate-(i)}
    Then, for every \(\xi\in \left[-\frac 1{k-1}, 1\right],\)
            \[M(L)\ge M(D-\xi T ).\]
\end{cor}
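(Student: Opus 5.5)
The plan is to reduce to Theorem~\ref{thm: meas Kol}\ref{line: meas Kol 1} on the large set $A$ and then pass to the limit $\eps\to0$ using Lemma~\ref{lem: approx M}. Fix $\xi\in\left[-\frac1{k-1},1\right]$ and $\eps>0$, and take the partition $X=\bigsqcup_{i=1}^{k+1}X_i$ with $A=A^\eps=\bigsqcup_{i=1}^kX_i$ and $B^\eps=X_{k+1}$. Consider the block decompositions of $L$, $D$, $T$ with respect to $L^2(X)=L^2(A^\eps)\oplus L^2(B^\eps)$, with upper-left blocks $L_A^\eps$, $D_A^\eps$, $T_A^\eps$. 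These compressions are bounded and self-adjoint, and $T_A^\eps=D_A^\eps-L_A^\eps$. Similarly $(D-\xi T)_A^\eps=D_A^\eps-\xi T_A^\eps$.

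Next, I apply Theorem~\ref{thm: meas Kol}\ref{line: meas Kol 1} to the operator $L_A^\eps$ on $L^2(A^\eps)$ with the partition $A^\eps=\bigsqcup_{i=1}^k X_i$. The block decomposition of $L_A^\eps$ with respect to $\bigoplus_{i=1}^kL^2(X_i)$ is exactly $(L_{ij})_{i,j=1}^k$. Hypothesis~(ii), read as saying that the compression of $D$ to $L^2(A)$ equals $\mathrm{Diag}(L_{11},\dots,L_{kk})$, shows that the block diagonal part of $L_A^\eps$ is $D_A^\eps$. The corresponding off-diagonal part is then $D_A^\eps-L_A^\eps=T_A^\eps$. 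The theorem therefore gives
\[M(L_A^\eps)\ge M(D_A^\eps-\xi T_A^\eps)\]
for every $\eps>0$. The measure-space setting of Theorem~\ref{thm: meas Kol} allows restricting $\mu$ to $A^\eps$, so this step needs nothing further.

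Finally, since $\mu(A^\eps)>1-\eps$, Lemma~\ref{lem: approx M} applies to both bounded self-adjoint operators $L$ and $D-\xi T$. It gives $M(L_A^\eps)\to M(L)$ and $M((D-\xi T)_A^\eps)\to M(D-\xi T)$ as $\eps\to0$. Letting $\eps\to0$ in the displayed inequality yields $M(L)\ge M(D-\xi T)$.

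The main point to check is the interpretation of hypothesis~(ii): it must mean that the $A$-compression of $D$ is block diagonal and agrees with $L$'s diagonal blocks. Otherwise the off-diagonal part of $L_A^\eps$ would not match $T_A^\eps$. A second check is that Lemma~\ref{lem: approx M} was proved for an arbitrary partition with $\mu(A^\eps)>1-\eps$, so that it may be applied even though the partition changes with $\eps$. Its proof only uses absolute continuity for a fixed near-maximizing $f$, so this is fine. Beyond these checks the argument is a direct combination of the two earlier results.
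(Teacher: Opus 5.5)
Your proposal is correct and follows the paper's argument: apply Theorem~\ref{thm: meas Kol}\ref{line: meas Kol 1} to the compression $L_A^\eps$, whose block diagonal part is $D_A^\eps$, and then pass to the limit $\eps\to0$ using Lemma~\ref{lem: approx M}. The only difference is cosmetic: for the upper bound, the paper gets $M(L)\ge M(L_A^\eps)$ directly from Proposition~\ref{prop: block decomp T} and applies Lemma~\ref{lem: approx M} only to $D-\xi T$, whereas you apply Lemma~\ref{lem: approx M} to $L$ as well.
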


\begin{proof}
    Fix $\eps>0$. Consider the partition $X=A \sqcup B$, where $A=\bigsqcup_{i=1}^k X_i$ and $B=X_{k+1}$.  
    Then, consider the block decompositions of $T,D,L$ with respect to the direct sum 
    \(L^2(X)= L^2(A) \oplus L^2(B)\):
        \[T = \begin{bmatrix}
        T^\eps_{A} & T^\eps_{12}\\[4pt]
        T^\eps_{21} & T^\eps_{B}
        \end{bmatrix},\
        D = \begin{bmatrix}
        D^\eps_{A} & D^\eps_{12}\\[4pt]
        D^\eps_{21} & D^\eps_{B}
        \end{bmatrix},\
        L = \begin{bmatrix}
        L^\eps_{A} & L^\eps_{12}\\[4pt]
        L^\eps_{21} & L^\eps_{B}
        \end{bmatrix}\]
    By assumption, $D^\eps_A$ is the block diagonal part of $L^\eps_A$. So we can apply Theorem~\ref{thm: meas Kol} to the upper blocks to get
        \[M(L^\eps_{A})
        \ge M(D^\eps_{A}-\xi T^\eps_{A}).\]
    Together with Proposition~\ref{prop: block decomp T}, we can conclude that
     \[M(L)\ge M(L^\eps_{A})
        \ge M(D^\eps_{A}-\xi T^\eps_{A}).\]
    Taking the limit as $\eps\to 0$ and using Lemma~\ref{lem: approx M}, we see that indeed
        \[M(L)\ge M(D-\xi T),\]
    as desired.
\end{proof}

We remark also that, from Corollary~\ref{cor: m-nik 1}, we obtain a novel proof of the approximate measurable version of Hoffman's theorem first shown in \cite[Theorem~6.8]{HST2026}.

\begin{cor}
    \cite[Theorem~6.8]{HST2026} Let $\G$ be a bounded-degree Borel pmp graph on $(X, \mu)$. Then 
    $$\chi_{\mu}^{\ap}(\G) \geq \left\lceil 1 - \frac{M(T_\G)}{m(T_\G)} \right\rceil.$$
\end{cor}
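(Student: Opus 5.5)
We obtain the Hoffman bound from Lemma~\ref{cor: m-nik 1} by choosing the multiplication operator $D$ to be a scalar, rather than $D_\G$. We follow the approximation scheme of the proof of Theorem~\ref{thm: approx m-nik}.

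Let $k=\chi_\mu^{\ap}(\G)$. We may assume $k\ge 2$: if $k=1$, then $T_\G=0$ on a conull set, so both sides are trivial or the bound is read as $1$. For $\eps>0$, take $A^\eps$ with $\mu(A^\eps)>1-\eps$ and a measurable $k$-coloring of $\G[A^\eps]$, and let $T_A^\eps$ be the corresponding diagonal block of $T_\G$. By Lemma~\ref{lem: zero-diagonal}, applied to $\G[A^\eps]$ (whose adjacency operator is exactly $T_A^\eps$), the block decomposition of $T_A^\eps$ with respect to the color classes has zero diagonal blocks. We apply Lemma~\ref{cor: m-nik 1} on $L^2(A^\eps)$ with the real bounded multiplication operator $D=c\,I$, where $c$ is a real constant. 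This gives
\[
c-m(T_A^\eps)=M(cI-T_A^\eps)\ \ge\ M\!\left(cI+\tfrac1{k-1}T_A^\eps\right)=c+\tfrac1{k-1}M(T_A^\eps).
\]
The scalar $c$ cancels, so we could simply take $c=0$, and we get $-m(T_A^\eps)\ge \frac1{k-1}M(T_A^\eps)$.

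Next we pass to the limit $\eps\to0$ using Lemma~\ref{lem: approx M}. Applied to $T_\G$, it gives $M(T_A^\eps)\to M(T_\G)$. Applied to $-T_\G$, whose diagonal block on $L^2(A^\eps)$ is $-T_A^\eps$, it gives $m(T_A^\eps)\to m(T_\G)$. Hence $-(k-1)\,m(T_\G)\ge M(T_\G)$.

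It remains to divide by $m(T_\G)$, and this is the only delicate step: we need $m(T_\G)<0$. If $T_\G\neq0$, pick $f$ with $\langle T_\G f,f\rangle>0$. Using $f$ on one color class and $-f$ on the others, or using the inequality itself, gives $-m(T_\G)\ge M(T_\G)/(k-1)>0$ whenever $M(T_\G)>0$. Since $M(T_\G)\ge0$ for a nonzero adjacency operator, the case $M(T_\G)=0$ forces $T_\G=0$, which is the trivial case. Dividing by $m(T_\G)<0$ yields $k\ge 1-M(T_\G)/m(T_\G)$, and since $k$ is an integer we may take the ceiling.
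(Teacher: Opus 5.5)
Your proposal is correct and follows the paper's proof. You apply Lemma~\ref{cor: m-nik 1} with $D=0$ to the block $T_A^\eps$, whose diagonal blocks vanish by Lemma~\ref{lem: zero-diagonal}, and pass to the limit with Lemma~\ref{lem: approx M}. The only real difference is that the paper bounds $-m(T_A^\eps)\le -m(T_\G)$ directly via Proposition~\ref{prop: block decomp T}\ref{line: block decomp Mm}, rather than taking a limit for $-T_\G$.

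Your extra care about the sign of $m(T_\G)$ and the case $k=1$, which the paper leaves implicit, is welcome. The cleanest justification is $M(T_\G)\ge\langle T_\G\ind,\ind\rangle=\int_X\deg\,d\mu$. When $k=1$, combine this with $M(T_\G)=\lim_{\eps\to0}M(T_A^\eps)=0$ to get $T_\G=0$.
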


\begin{proof}
    Write $T = T_\G, L = L_\G$, $D = 0$, and $k' = \chi_{\mu}(\G)$, and let $X = \bigsqcup_{i = 1}^{k'} X_i$ be a partition of $X$ into measurable independent sets. Then by Corollary~\ref{cor: m-nik 1} and Lemma~\ref{lem: zero-diagonal},
    \[-m(T_\G)
    =M(-T_\G) 
    \ge \frac{1}{\chi_\mu(\G) - 1} M(T_\G).\]
It follows that
    \[\chi_\mu(\G)
    \ge \ceil{1 -\frac{M(T_\G)}{m(T_\G)}}.\]

Now fix $\eps>0$ and let $k=\chi_\mu^\ap(\G)$. Then there exist a measurable set $B^\eps$ with $\mu(B^\eps)<\eps$ and a measurable $k$-coloring \(c_\eps\colon X\backslash B^\eps\to \{1,\dots k\}\). Let \(A^\eps=X\backslash B^\eps\). Then, consider the block decomposition of $T_\G$ with respect to the direct sum 
\(L^2(X)= L^2(A^\eps) \oplus L^2(B^\eps)\):    
    \[T_\G =
    \begin{bmatrix}
    T^\eps_{A} & T^\eps_{12}\\[4pt]
    T^\eps_{21} & T^\eps_{B}
    \end{bmatrix}.\]
Since the induced subgraph $\G[A^\eps]$ has chromatic number at most $k$, as in the first part of the proof, we get
    \[-m(T^\eps_A)
    \ge \frac1{k - 1} M(T^\eps_A).\]
Then, by Proposition~\ref{prop: block decomp T}\ref{line: block decomp Mm}, we have \(-m(T_\G) \ge \frac1{k - 1} M(T^\eps_A)\), so taking the limit as $\eps\to 0$ and applying Lemma~\ref{lem: approx M}, we have
    \[-m(T_\G)
    \ge \frac{1}{k - 1} M(T_\G).\]
Since $k=\chi^\ap_\mu(\G)$, this gives the desired inequality.
\end{proof}

\section{Bipartiteness in the non-regular case}\label{sec: bipartiteness}

In this section, we turn to the question of approximate measurable bipartiteness, i.e., the situation when $\chi_\mu^\ap(\G)=2$. For a finite connected graph $G$, it is well-known that $G$ is bipartite if and only if the spectrum of its adjacency operator is symmetric about 0; in fact, for the converse direction, it is enough that the negative of the maximum eigenvalue of the adjacency matrix of $G$ is also an eigenvalue. We adapt this characterization to the approximate measurable setting.
This result draws inspiration from the proof approach in \cite[Theorem~4.9]{HST2026} and generalizes the result to the non-regular case.

\bipartiteness*

\begin{proof}
    Let $M=M(T_\G)$. Assume that \(-M\in \sigma(T_\G)\) and fix $1>\eps>0$. Our goal is to construct disjoint $\G$-independent Borel subsets $A,B\sub X$ with $\mu(A\cup B)>1-\eps$.
    
    Since $\G$ has spectral gap, there exists some unit eigenvector of $T_\G$ associated with $M$, which we will denote by $h\in L^2(X)$. By \cite[Lemma~3.18]{HST2026}, we can assume $h$ is strictly positive.
    
    Let $(f_n)_{n\in \N}$ be a sequence of unit functions in $L^2(X)$ such that \(\norm{T_\G f_n+Mf_n }\to0\). Note that we can assume that the $f_n$ are all real-valued. (If \(f_n = u_n + iv_n\) is the decomposition of $f_n$ into its real and imaginary parts, then the decomposition of $T_\G f_n$ into its real and imaginary parts is simply given by $T_\G f_n = T_\G u_n + iT_\G v_n$.)

    \begin{claim}\label{claim: abs f_ns are multiples of h}
        Without loss of generality, we can assume $\abs{f_n} $ is a multiple of $h$ for all $n\in \N$.
    \end{claim}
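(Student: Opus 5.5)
By Lemma~\ref{lem: -M and abs eig'fns}, the functions $\abs{f_n}$ form a sequence of unit approximate eigenvectors of $T_\G$ associated with $M$. The plan is to use the adjacency spectral gap to show that $\abs{f_n}$ is close to a multiple of $h$. We then modify $f_n$ by keeping its signs but replacing its absolute value with that multiple of $h$, and check that the new functions are still approximate eigenvectors for $-M$.

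\textbf{Step 1 (spectral gap).} First we need the eigenspace of $M$ to be one-dimensional, spanned by $h$. Given another eigenvector, we may take it real and orthogonal to $h$. The same Perron--Frobenius-type argument behind \cite[Lemma~3.18]{HST2026} (its absolute value is again an eigenvector, and ergodicity makes it strictly positive) would then produce two strictly positive eigenvectors that cannot be orthogonal. Now let $P$ be the orthogonal projection onto $\C h$. Write $\abs{f_n} = c_n h + r_n$ with $c_n = \ang{\abs{f_n}, h} \ge 0$ and $r_n \perp h$. Because $\sigma(T_\G)\cap(M-\delta,M)=\emptyset$, the operator $T_\G - M$ restricted to $h^\perp$ is bounded below by $\delta$. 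Hence
\[\norm{r_n}\le \delta^{-1}\norm{(T_\G-M)\abs{f_n}}\to0,\]
and consequently $c_n\to1$.

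\textbf{Step 2 (new functions).} Define $\tilde f_n = \sgn(f_n)\, c_n h$, taking $\sgn(0)=1$. Then $\abs{\tilde f_n} = c_n h$ pointwise. Since $h$ is strictly positive, $\abs{\tilde f_n - f_n} = \bigl|c_n h - \abs{f_n}\bigr|$ pointwise. This holds also where $f_n=0$, because there the right side equals $c_n h$. Therefore $\norm{\tilde f_n - f_n} = \norm{r_n}\to 0$. As $T_\G$ is bounded,
\[\norm{T_\G\tilde f_n + M\tilde f_n}\le \norm{T_\G f_n+Mf_n} + (\norm{T_\G}+M)\norm{r_n}\to0.\]
Also $\norm{\tilde f_n}=c_n\to1$, so we can normalize: $\tilde f_n/\norm{\tilde f_n} = \sgn(f_n)h$. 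These are real unit approximate eigenvectors for $-M$ whose absolute values equal $h$, which proves the claim.

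\textbf{Main obstacle.} The delicate step is Step 1, namely that the eigenspace of $M$ is spanned by $h$. This is where ergodicity is used, and it must be made rigorous in the measurable, non-regular setting. For a real eigenvector $g \perp h$, the inequality $\ang{T_\G\abs{g},\abs{g}}\ge\abs{\ang{T_\G g,g}}=M\norm{g}^2$ shows that $\abs{g}$ is an eigenvector, and likewise so are $g^{\pm}=(\abs{g}\pm g)/2$. By the positivity result in \cite[Lemma~3.18]{HST2026}, each nonzero nonnegative eigenvector for $M$ is a.e.\ strictly positive, so $g^+$ and $g^-$ cannot both be nonzero. Thus $g$ has constant sign a.e., which contradicts $\ang{g,h}=0$ with $h>0$.
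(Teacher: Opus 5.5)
Your proposal is correct and follows the paper's proof: you use the same decomposition $\abs{f_n}=\ang{\abs{f_n},h}h+r_n$ and the same spectral-gap estimate forcing $\norm{r_n}\to 0$, and your $\tilde f_n=\sgn(f_n)c_nh$ is exactly the paper's $f_n-\sgn(f_n)g_n^{(1)}$. The only difference is that you verify the one-dimensionality of the $M$-eigenspace directly, via the $g^{\pm}$ argument, and bound $\norm{\tilde f_n-f_n}$ explicitly, whereas the paper simply cites \cite[Lemma~3.18]{HST2026} and \cite[Proposition~2.3]{HST2026}.
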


    \begin{proof}[Proof of Claim~\ref{claim: abs f_ns are multiples of h}]
        By Lemma~\ref{lem: -M and abs eig'fns}, 
        \(\norm{T_\G \abs{f_n} - M\abs{f_n}} \to 0\), i.e., $(g_n)_{n\in \N}=(\abs{f_n})_{n\in \N}$ is a sequence of approximate eigenvectors for $M$.
        So we can break down $g_n$ into its orthogonal decomposition with respect to $\C h$ and $(\C h)^\perp$:
            \[g_n = g_n^{(0)} + g_n^{(1)},\quad 
            g_n^{(0)} = \ang{g_n, h} h \]
        Then, by \cite[Lemma~3.18]{HST2026} and \cite[Proposition~2.3]{HST2026}, together with the assumption of spectral gap, since $g_n^{(1)}\in (\C h)^\perp$ there exists $c>0$ such that
            \[\norm{T_\G g_n^{(1)} -Mg_n^{(1)}} 
            \ge c\norm{g_n^{(1)}}\]
        for any $n\in \N$.

        Since $g_n^{(0)}$ is a scalar multiple of $h$, it must also be an eigenvector, i.e., $T_\G  g_n^{(0)}  = Mg_n^{(0)}$. This implies
            \[\norm{T_\G g_n^{(1)} - Mg_n^{(1)} } 
            = \norm{T_\G g_n -Mg_n} \to 0,\]
        forcing $\norm{g_n^{(1)}} \to 0$. 
        Consider now the function defined by
            \[\tilde f_n(x) = f_n(x) - \sgn(f_n(x)) \cdot g_n^{(1)}(x).\]
        Then observe that $\abs{\tilde f_n}=\abs{g_n^{(0)}}$:
            \[\abs{\tilde f_n(x)} =
                \begin{cases} 
                \abs{f_n(x)-g_n^{(1)}(x) } 
                =\abs{\abs{f_n(x)}-g_n^{(1)}(x)} 
                =\abs{g_n^{(0)}(x)} 
                &\text{if }f_n(x)\ge 0,\\[4pt]
                \abs{f_n(x)+g_n^{(1)}(x)}
                =\abs{-\norm{f_n(x)}+g_n^{(1)}(x)} 
                =\abs{g_n^{(0)}(x)} 
                &\text{if }f_n(x)< 0.\\[4pt]  
                \end{cases}\]
        Thus, $\abs{\tilde f_n}$ is also a multiple of $h$ (as $h$ is strictly positive). Moreover,
        \begin{align*} 
            \norm{T_\G \tilde f_n+M\tilde f_n} 
            &\le \norm{T_\G f_n+Mf_n} + \norm{T_\G g_n^{(1)}} + M\norm{g_n^{(1)}}\\
            &\le \norm{T_\G f_n+Mf_n} + 2M\norm{g_n^{(1)}}\to 0, 
        \end{align*}
        as desired.
    \end{proof}
    
    So, rescaling if necessary, we can find a sequence $(f_n)_{n\in \N}$ of unit functions in $L^2(X)$ such that $\abs{f_n}=h$ for any $n\in \N$ and $\norm{T_\G f_n+Mf_n}\to0$.
    Since $h(x)$ is strictly positive, we can pick a sufficiently small $\delta>0$ such that the set \(Z=\st{x\in X :h(x)\le \delta }\) satisfies $\mu(Z)<\frac \eps 2$. 
    Then, for all $n\in \N$ let
        \[Y_n=\st{ x\in X: \abs{T_\G f_n(x)+Mf_n(x)} \ge \delta}.\]
    Notice that
        \[\mu(Y_n) \cdot \delta^2
        =\int_{Y_n} \delta^2 \dmu
        \leq\int_X{|T_\G f_n+Mf_n|^2} \dmu
        = \norm{T_\G f_n+Mf_n}^2 \to 0.\]
    This implies there exists $N\in \N$ such that $\mu(Y_N)<\frac \eps 2$. Then $X'=X\backslash(Y_N\cup Z)$ has measure $\mu(X')>1-\eps$. Recall that $\abs{f_N}=h$ so we can partition $X'$ into the sets
        \[A=\st{ x\in X': f_N(x)=h(x) },\ 
        B=\st{ x\in X': f_N(x)=-h(x) }.\]
    It remains to show that $A$ and $B$ are each $\G$-independent. Let $x\in A$ and observe that since $h$ is an eigenvector of $T_\G$ associated with $M$, we have
        \[T_\G f_N(x) + Mf_N(x)  
        =T_\G f_N(x) + T_\G h(x)  
        =\sum_{y\sim x} \left(f_N(y)+h(y)\right).\]
    Since $f_N(y)=\pm h(y)$ for all $y\in X$, each term in this sum is either $0$ or $2h(y)$, i.e., $  T_\G f_N(x)+Mf_N(x)$ is nonnegative.
    Assume for the sake of contradiction there is some neighbor $y_0$ of $x$ such that $y_0\in A$. Then
        \[\abs{T_\G f_N(x)+Mf_N(x)}
        =\sum_{y\sim x} \left(f_N(y)+h(y)\right) 
        \ge 2h(y_0) >2\delta,\]
    since $y_0\notin Z$.
    However, also \(x\notin Y_N\), i.e., \(\abs{T_\G f_N(x)+Mf_N(x)} <\delta\), a contradiction.
    By a symmetric argument, $B$ must also be $\G$-independent, as desired.
\end{proof}

We remark here that, while the ergodicity assumption in Theorem~\ref{thm: bipartite} is necessary (see \cite[Remark~4.13]{HST2026}), we do not know whether the spectral gap assumption is also necessary.

\section{The Measurable Domination Number and the Laplacian Spectral Gap}
\label{sec: domination}

In this section, we define the measurable domination number for a bounded-degree Borel pmp graph and use it to give bounds on the size of the Laplacian spectral gap. Several of our results are inspired by certain classical theorems of \cite{LLT05}.

We say a measurable subset $Y\sub X$ is a \emph{dominating set} of a bounded-degree Borel pmp graph $\G$ on $(X, \mu)$ if $Y \cup \mathcal{N}(Y) = X$ (up to measure zero).
The \emph{measurable domination number} $\gmu{\G}$ of $\G$ is then given by
    \[\gmu{\G}=\inf_{\substack{Y\sub X,\\Y\text{ dominating}}} \mu(Y).\]

First, using Lemma~\ref{lem: edge density representor}, we can easily generalize \cite[Theorem~10]{LLT05}, though note that Proposition~\ref{prop: generalized gamma bound} below provides a sharper bound.

\begin{prop}\label{prop: meas llt 10}
    Let $\G$ be a bounded-degree Borel pmp graph on a standard probability space $(X,\mu)$. Then,
    \[\gmu{\G}\ge \frac 1{M(L_\G)}.\]
\end{prop}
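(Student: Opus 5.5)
The plan is to test the Rayleigh quotient of $L_\G$ on the function supplied by Lemma~\ref{lem: edge density representor}, with $Y$ an arbitrary dominating set, and then take the infimum over $Y$.

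Fix a measurable dominating set $Y \sub X$, and first dispose of the degenerate cases. The case $\mu(Y) = 0$ cannot occur. Indeed, by the mass transport principle~\eqref{eq:masstransport}, applied to the transport $\varphi(x,y) = \ind_Y(x)\ind_{x\sim y}$, one gets $\mu(\calN(Y)) \le \Delta \mu(Y) = 0$. Then $Y \cup \calN(Y)$ would be null rather than conull.

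If $\mu(Y) = 1$, the desired inequality $\mu(Y) \ge 1/M(L_\G)$ reduces to $M(L_\G) \ge 1$. This follows from Lemma~\ref{lem: M(L) degree bound}, since $M(L_\G) \ge \Delta + 1 \ge 1$. This step needs $\G$ to have edges on a set of positive measure, i.e.\ $\Delta \ge 1$. In the edgeless case $L_\G = 0$, so $M(L_\G)=0$ and the right-hand side must be read as $+\infty$, and I will flag this convention.

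So assume $0 < \mu(Y) < 1$, and let $f = \ind_Y - \mu(Y)\ind$. Then $f \neq 0$. By the variational characterization of $M(L_\G)$ together with Lemma~\ref{lem: edge density representor},
\[
M(L_\G) \ge \frac{\ang{L_\G f, f}}{\norm{f}^2} = \frac{\nu(E_Y)}{\mu(Y)(1-\mu(Y))}.
\]
The key estimate is a lower bound on the boundary edge measure. Using the second expression for $\nu(E_Y)$ from the edge-measure discussion,
\[
\nu(E_Y) = \int_{Y^c} \deg_Y(x) \dmu(x) \ge \mu(Y^c) = 1 - \mu(Y).
\]
This holds because $Y$ is dominating, so almost every $x \in Y^c$ has at least one neighbor in $Y$, i.e.\ $\deg_Y(x) \ge 1$.

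Substituting this bound gives $M(L_\G) \ge 1/\mu(Y)$, that is, $\mu(Y) \ge 1/M(L_\G)$. Since $Y$ was an arbitrary dominating set, taking the infimum yields $\gmu{\G} \ge 1/M(L_\G)$.

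There is no real obstacle here. The only points requiring care are the degenerate cases $\mu(Y) \in \{0,1\}$ and the edgeless convention, and the fact that the domination property holds only up to a null set, which does not affect the integral bound.
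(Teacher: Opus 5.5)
Your proof is correct and follows the paper's argument: you apply Lemma~\ref{lem: edge density representor} to $\ind_Y-\mu(Y)\ind$ and bound $\nu(E_Y)=\int_{Y^c}\deg_Y \dmu \ge 1-\mu(Y)$ using domination; your separate handling of $\mu(Y)\in\{0,1\}$, which the paper skips, is also correct. One small correction to your edgeless remark: there every dominating set is conull, so $\gamma_\mu(\G)=1$ while $M(L_\G)=0$, and reading $1/M(L_\G)$ as $+\infty$ would make the inequality false rather than rescue it, so the statement should simply be understood to assume $\Delta\ge 1$.
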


\begin{proof}
    Let $Y\sub X$ be a measurable dominating set of $\G $, and let $E_Y = E(\G)\cap \left(Y\times Y^c\right)$ be the set of its boundary edges. An easy computation then gives
    \[
        \nu(E_Y) = \int_{Y^c} \deg_{Y}(x)\ d\mu(x) \ge \int_{Y^c} 1\ d\mu(x) =\mu(Y^c)=1-\mu(Y)
    \]
    where the inequality follows from the definition of a dominating set.
    By Lemma~\ref{lem: edge density representor}, there exists a vector $f\in L^2(X)$ such that
        \[M(L_\G) 
        \ge \frac{\ang{L_\G f, f}} {\norm{f}^2}\
        = \frac {\nu(E_Y)}{\mu(Y)(1-\mu(Y))} 
        \ge \frac 1{\mu(Y)},\]
    and the result follows.
\end{proof}

The following result gives an alternative lower bound on $\gmu{\G}$ in terms of the maximum degree. Note that it follows from Lemma~\ref{lem: M(L) degree bound} that this bound improves on the one from Proposition~\ref{prop: meas llt 10}.

\begin{prop}\label{prop: generalized gamma bound}
    Let $\G$ be a bounded-degree Borel pmp graph on a standard probability space $(X,\mu)$ with maximum degree $\Delta$. Then,
    \[\gmu{\G}\ge \frac 1{\Delta+1}.\]
\end{prop}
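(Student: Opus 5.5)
The plan is a direct counting argument via the mass transport principle~\eqref{eq:masstransport}, with no spectral input. Fix a measurable dominating set $Y\sub X$; it suffices to show $\mu(Y)\ge \frac1{\Delta+1}$, since taking the infimum over all such $Y$ then gives the bound on $\gmu{\G}$.

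First, define the Borel transport $\varphi$ on $E_\G$ by $\varphi(x,y)=1$ if $x\in Y^c$, $y\in Y$ and $x\sim y$, and $\varphi(x,y)=0$ otherwise. Then $\out_\varphi(x)=\deg_Y(x)$ for $x\in Y^c$ and $\out_\varphi(x)=0$ for $x\in Y$. Because $Y$ is dominating, $\deg_Y(x)\ge 1$ for almost every $x\in Y^c$, so
\[
\int_X \out_\varphi \dmu \ge \mu(Y^c)=1-\mu(Y).
\]
This is exactly the first step of the proof of Proposition~\ref{prop: meas llt 10}; equivalently, it is the edge-measure identity $\nu(E_Y)=\int_{Y^c}\deg_Y \dmu$.

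Second, compute the other side. We have $\ins_\varphi(y)=\deg_{Y^c}(y)\le \deg(y)\le\Delta$ for almost every $y\in Y$, and $\ins_\varphi(y)=0$ for $y\notin Y$. Hence
\[
\int_X \ins_\varphi\dmu\le \Delta\,\mu(Y).
\]
Equivalently, $\nu(E_Y)=\int_Y \deg_{Y^c}\dmu\le \Delta\mu(Y)$.

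Finally, equate the two integrals by~\eqref{eq:masstransport}. This gives $1-\mu(Y)\le\Delta\mu(Y)$, that is, $\mu(Y)\ge\frac1{\Delta+1}$.

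The only point requiring care is the measure-zero bookkeeping. By the definition of $\Delta$, the bound $\deg\le\Delta$ holds only almost everywhere, and the domination property holds only up to a null set. Both inequalities are used inside integrals, though, so null sets are harmless and I expect no genuine obstacle.
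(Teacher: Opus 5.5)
Your proof is correct and is essentially the paper's argument. The paper cites \cite[Corollary~2.11]{HST2026} for $\mu(\calN(Y))\le\Delta\mu(Y)$ and combines it with $1=\mu(Y\cup\calN(Y))\le\mu(Y)+\mu(\calN(Y))$; you instead run the underlying mass-transport count directly on the boundary edges to get $1-\mu(Y)\le\nu(E_Y)\le\Delta\mu(Y)$, which is self-contained and makes the link with the proof of Proposition~\ref{prop: meas llt 10} explicit.
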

\begin{proof}
    Assume $A\subset X$ is a dominating set. By definition, this implies that $\mu(A\cup\calN(A)) = 1$. Since $\G$ has maximum degree $\Delta$, it follows from \cite[Corollary~2.11]{HST2026} that $\mu(\calN(A))\leq \Delta\mu(A)$. We thus conclude that
    \[
    \mu(A) + \Delta\mu(A) \geq \mu(A) + \mu(\calN(A)) \geq \mu(A\cup\calN(A)) = 1.
    \]
    Hence $\mu(A) \geq \frac1{\Delta+1}$, and the result follows.
\end{proof}

We now turn towards our main theorem of this section, Theorem~\ref{thm: domination}. 
We start with the following proposition, inspired by \cite[Lemma~2]{LLT05}, which guarantees the existence of a dominating set $Y$ with measure arbitrarily close to $\gmu{\G}$ and such that ``most'' $x\in Y$ have a \emph{sole neighbor}, i.e., a vertex $y\in Y^c$ whose only neighbor in $Y$ is $x$. For each $x \in Y$, we will write $W_x(Y)$ for its set of sole neighbors, i.e.,
$$W_x(Y) = \{y \in Y^c : \calN(y) \cap Y = \{x\} \}.$$
When $Y$ is understood, we also write $W_x$ for $W_x(Y)$.

\begin{prop}\label{prop: llt lemma 2}
     Let $\G$ be a bounded-degree Borel pmp graph with no isolated points (up to measure zero). Then, for any $\eps>0$, there exists a Borel dominating set $Y$ of $\G$ with $\mu(Y) < \gmu{\G}+\eps$ such that
     \[\mu( \left\{ x\in Y: W_x(Y) \ne\empt \right\}) >(1-\eps) \mu(Y).\]
\end{prop}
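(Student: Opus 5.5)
We follow the classical idea of \cite[Lemma~2]{LLT05}. In the finite case, one takes a minimum dominating set; a vertex $x$ with no sole neighbor either is isolated in $Y$ (and then can be handled) or can be removed. In the measurable setting we cannot take a minimizer, so we start from a near-optimal dominating set and apply a measurable ``local improvement'' step.

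Fix $\eps>0$ and choose $\eta>0$ small, to be determined in terms of $\eps$ and $\Delta$. Pick a Borel dominating set $Y_0$ with $\mu(Y_0)<\gmu{\G}+\eta$. Let
\[
B=\{x\in Y_0: W_x(Y_0)=\empt\}
\]
be the set of ``bad'' points. We must show that $\mu(B)\le \eps\,\mu(Y)$ after a modification. Since $\gmu{\G}\ge \frac1{\Delta+1}$ by Proposition~\ref{prop: generalized gamma bound}, it suffices to make $\mu(B)$ small in absolute terms. The modification proceeds by splitting $B$ into two types.

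\emph{Type 1: $x\in B$ has a neighbor in $Y_0$.} Then every $y\in\calN(x)\setminus Y_0$ has another neighbor in $Y_0$ (this is what $W_x=\empt$ means), and $x$ itself is dominated by its neighbor in $Y_0$. So $x$ can be deleted from $Y_0$ while keeping domination, provided we delete only an independent-enough family at once. Using \cite[Proposition~4.6]{KST1999}, partition these points into finitely many (at most $N=N(\Delta)$) Borel sets $B_1,\dots,B_N$ whose points are pairwise at distance $\ge 3$. For points that far apart, removing all of $B_j$ simultaneously preserves domination, because the witnesses for different points do not interfere. If some type 1 part had measure $\ge \eta$, removing it would yield a dominating set of measure $<\gmu{\G}$, a contradiction. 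Hence the type 1 bad set has measure $<N\eta$.

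\emph{Type 2: $x\in B$ has no neighbor in $Y_0$.} Since $\G$ has no isolated points, $x$ has a neighbor $y\notin Y_0$, and $y$ has another neighbor $z\in Y_0$, $z\neq x$. We swap $x$ for $y$: replace $x$ by $y$ in $Y_0$. Then $x$ is dominated by $y$, and every other vertex previously dominated by $x$ still has another neighbor in $Y_0$. Now $y$ has a neighbor $z$ in the set, so $y$ is not of type 2. Performing this swap measurably and simultaneously on a far-apart Borel piece (again via a distance-$\ge 5$ coloring from \cite[Proposition~4.6]{KST1999} together with a Borel choice of $y$) keeps the measure unchanged, by pmp/mass transport, and preserves domination. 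After finitely many such swaps, the type 2 points are converted. Newly created bad points are then only of type 1 and are handled as above.

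To avoid infinite regress, the cleanest route is to perform all the swaps first, obtaining $Y_1$ with $\mu(Y_1)=\mu(Y_0)$ and no type 2 points. This is the main obstacle: we must check that a swap cannot create new type 2 points. It cannot, since any $x'$ that loses its $Y$-neighbor $x$ is not in $Y$. We then apply the type 1 argument to $Y_1$, which shows $\mu(B)<N\eta$. Finally, take $Y=Y_1$ and choose $\eta$ with $N\eta<\eps/(\Delta+1)\le \eps\,\mu(Y)$ and $\eta<\eps$.
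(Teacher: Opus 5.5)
\textbf{There is a genuine gap in the type~2 step.} Your type~1 step is fine; it is the paper's argument for the set $A$, using a distance-$\ge 3$ coloring. The type~2 step, however, rests on the claim that a swap cannot create new type~2 points, and that claim is false. You only checked that deleting $x$ lowers no $\deg_Y$ on $Y$, which is true. But \emph{adding} $y$ can destroy the sole neighbors of some other $w\in Y$.

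Concretely, take edges $x\sim y$, $y\sim z$, $y\sim u$, $u\sim w$, with $x,z,w\in Y_0$ and $y,u\notin Y_0$. Such gadgets can sit on a small-measure part of an otherwise near-optimal dominating set, so near-optimality does not exclude them.
\begin{itemize}
\item Before the swap, $x$ is type~2, since its only neighbor $y$ also sees $z$.
\item Before the swap, $w$ is good: $W_w(Y_0)=\{u\}$, even though $\deg_{Y_0}(w)=0$.
\item After replacing $x$ by $y$, the vertex $u$ has two neighbors $y,w$ in the new set. So $W_w=\empt$, and $w$ still has no neighbor in the set: $w$ is a new type~2 point.
\end{itemize}
Hence neither ``performing all the swaps first yields $Y_1$ with no type~2 points'' nor ``newly created bad points are only of type~1'' is justified. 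Your procedure has no termination mechanism, which is exactly the regress you wanted to avoid.

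\textbf{What is missing is a bounded quantity that every swap strictly increases.} Take $I(Y)=\int_Y \deg_Y\,d\mu\le\Delta$. Let $P$ be a Borel set of type~2 points at pairwise distance $\ge 3$, and let $f(P)$ be a Borel choice of neighbors.
\begin{itemize}
\item Points of $P$ have no $Y$-neighbors, so $\deg$ does not drop on $Y\setminus P$.
\item Since $W_p=\empt$ for $p\in P$, each $f(p)$ has a neighbor in $Y\setminus P$.
\end{itemize}
So $I$ increases by at least $\mu(P)$, while the measure of the set stays fixed (pmp).

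The paper exploits this by choosing $Y$ with $\mu(Y)<\gamma_\mu+\eps'$ whose average internal degree $a(Y)$ is within $\delta$ of its supremum over such dominating sets. A type~2 set of measure $>\frac{\eps}{2}\mu(Y)$ then yields a swap raising $a$ by more than $\delta$, a contradiction.

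Alternatively, you can iterate. While the type~2 set (recomputed for the current set) has measure $\ge\eta'$, swap its largest color class, which has measure $\ge\eta'/N$. This can happen at most $N\Delta/\eta'$ times. Afterwards the type~2 set is small rather than empty. Your type~1 argument applied to the final set then finishes the proof, provided $\eta'+N\eta<\eps\,\mu(Y)$.
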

\begin{proof}
We will write $\gamma_\mu = \gmu{\G}$ for convenience. Fix $\eps>0$, and let $\eps' = \frac{\eps}{2\Delta^2} \gamma_\mu$, where $\Delta$ denotes the maximum degree of $\G$. 
Let $a\colon \mathcal B(X)\to \R$ denote the function which assigns the average internal degree to any measurable subset of $X$:  
\[a(Y) = \frac 1{\mu(Y)} \int_{Y} \deg_Y(x) \dmu(x).\]

Consider some dominating set $Y$ with $\mu(Y)<\gamma_\mu+\eps'$ such that 
\begin{equation}\label{eq: high ave degree} a(Y)>\sup_{\substack{D\sub X,\\ D\text{ dominating},\\ \mu(D)<\gamma_\mu+\eps'} } \left( a(D)\right) -\delta \end{equation}  
where $\delta = \dfrac \eps{3\Delta^2}$.  
Let $A$ and $B$ be the sets of ``bad'' vertices $x$, i.e., for which $W_x = \empt$, separated based on whether their $Y$-degree is zero or not:  
\begin{align*} 
A = \left\{ x \in Y: W_x = \empt,\quad \deg_Y(x)>0 \right\},\\  
B = \left\{ x \in Y: W_x = \empt,\quad \deg_Y(x)=0 \right\}. 
\end{align*}

It is clear that $A$ and $B$ are both measurable. We will show $\mu(A)+\mu(B)<\eps\mu (Y)$.

\begin{claim}\label{claim: redundant vertices}
$\mu(A)<\frac \eps 2 \mu(Y)$.
\end{claim}

\begin{proof}[Proof of Claim~\ref{claim: redundant vertices}]
    
    Assume for the sake of contradiction that $\mu(A) >\frac \eps2 \mu(Y)$.

    Consider the distance-2 graph ${\G }^2$; since $\G $ has a degree bound of $\Delta$, ${\G }^2$ must have a degree bound of $\Delta(\Delta-1)=\Delta^2-\Delta$. Therefore, ${\G }^2$ can be colored with $k$ Borel color classes $X=\bigsqcup_{i=1}^{k} X_i$ where $k=\Delta^2-\Delta+1$. So, we may partition $A$ into $k$ Borel sets $(A_i)_{i=1}^k$ where $A_i=A\cap X_i$. Since the neighborhoods of any two distinct vertices in $A_1$ must be disjoint, every element of $A_1$ has a neighbor in $Y\backslash A_1$ and no sole neighbors outside $Y$. Thus, we can define a new Borel dominating set $Y^\star=Y\backslash A_1$. 
    
    By the pigeonhole principle, without loss of generality, 
    \[\mu(A_1)\ge  \frac{\mu(A)}{k} > \frac{\eps}{2k} \mu(Y).\]  
    This implies
    \[\mu(Y)
    =\mu(Y^\star)+\mu(A_1) 
    > \mu(Y^\star)+  \frac{\eps}{2k} \mu(Y)
    \geq \gamma_\mu +  \frac{\eps}{2\Delta^2} \mu(Y).\] 
    However, this is a contradiction with the assumption that $\mu(Y)<\gamma_\mu+\eps'$. So indeed, $\mu(A)<\frac \eps 2 \mu(Y)$.
\end{proof}

\begin{claim}\label{claim: inefficient vertices}
$\mu(B)<\frac \eps 2 \mu(Y)$.
\end{claim}

\begin{proof}[Proof of Claim~\ref{claim: inefficient vertices}]
Assume for the sake of contradiction that $\mu(B) >\frac \eps2 \mu(Y)$.  It suffices to replace some of these vertices in $B$, creating a new dominating set $Y'$ with the same measure as $Y$ but $a(Y')> a(Y)+\delta$, thus deriving a contradiction with the assumption \eqref{eq: high ave degree}.  

Let $f\colon B\to X$ be a Borel function such that $f(x)\in \calN (x)$ for all $x\in B$ (for instance, by sending $x$ to its least neighbor according to some Borel linear order on $X$). 
Recall the partition $X=\bigsqcup_{i=1}^k X_i$ of $X$ into Borel color classes of $\G^2$ from the proof of Claim~\ref{claim: redundant vertices}.
We may partition $B$ into $k$ Borel sets $(B_i)_{i=1}^k$ where $B_i=B\cap X_i$. By the pigeonhole principle, without loss of generality, we can again assume that
    \[\mu(B_1)
    \ge  \frac{\mu(B)}{k} 
    > \frac{\eps}{2k} \mu(Y). \]  
Since the neighborhoods of any two distinct vertices in $B_1$ must be disjoint, $f$ is injective on $B_1$. In particular, $f|_{B_1}$ is a partial Borel bijection that preserves the connectedness relation. Since $\G$ is pmp, we thus conclude that
\begin{equation}\label{eq:mufB1}
\mu(f(B_1))=\mu(B_1)> \frac{\eps}{2k} \mu(Y).
\end{equation}
Now, consider $Y'=Y\backslash B_1 \sqcup f(B_1)$. Since $B_1$ and $f(B_1)$ are disjoint by construction, we get $\mu(Y')=\mu(Y)<\gamma_\mu+\eps'$. Moreover, since $W_x = \empt$ for every $x\in B$, $Y'$ is a dominating set.

It remains to show $a(Y')> a(Y)+\delta$. First, recall that $\deg_Y(x)=0$ for all $x\in B_1$, and thus $\deg_{Y'}(x)\ge \deg _Y(x)$ for any $x\in Y\backslash B_1$. 
This implies  
\begin{align}
\begin{split}\label{eq:aY}
    a(Y)
    &= \frac 1{\mu(Y)} \left[ \int_{Y\backslash B_1} \deg_{Y}(x) \ d\mu(x) + \int_{B_1} \deg_{Y}(x) \ d\mu(x) \right]\\
    &\le \frac 1{\mu(Y)} \int_{Y\backslash B_1} \deg_{Y'}(x) \ d\mu(x).
\end{split}
\end{align}
Further recall that vertices in $B_1$ have no sole neighbors and the neighborhoods of distinct vertices in $B_1$ are disjoint. So for every $x\in f(B_1)$, there must exist some $y\in Y\backslash B_1$ such that $x\sim y$. This implies $\deg_{Y'}(x)\ge1$ for any $x\in f(B_1)$. Combining this with \eqref{eq:mufB1} and \eqref{eq:aY}, we get
\begin{align*}
    a(Y') 
    &= \frac 1{\mu(Y')} \left[ \int_{Y\backslash B_1} \deg_{Y'}(x) \ d\mu(x)  + \int_{f(B_1)} \deg_{Y'}(x) \ d\mu(x)  \right]\\  
    &\ge \frac 1{\mu(Y)} \left[ \int_{Y\backslash B_1} \deg_{Y'}(x) \ d\mu(x)  + \mu(f(B_1))  \right]\\
    &\ge  a(Y) + \frac{\eps}{2k} \\
    &> a(Y)+\frac{\eps}{3\Delta^2}.
\end{align*}
Thus $a(Y')>a(Y)+\delta$, a contradiction with \eqref{eq: high ave degree}, as desired.
\end{proof}
Combining Claims~\ref{claim: redundant vertices} and \ref{claim: inefficient vertices}, we deduce that $\mu( \left\{ x\in Y: W_x \ne\empt \right\}) >(1-\eps) \mu(Y)$. Since also $\mu(Y) < \gamma_\mu + \eps' < \gamma_\mu + \eps$, this finishes the proof.
\end{proof}

The main result of this section is inspired by \cite[Theorem~8]{LLT05} and uses Lemma~\ref{lem: edge density representor} and Proposition~\ref{prop: llt lemma 2}. We note that our bound is different from the bound in \cite[Theorem~8]{LLT05}, even for finite graphs, and that they are in general incomparable. The reason for this is that, while part of the proof is similar, \cite[Theorem~8]{LLT05} bounds the degrees of certain vertices essentially by the total number of vertices, while our proof uses the maximum degree (which therefore shows up in the inequality below).

\domination*

\begin{proof}
    Fix $\eps>0$. 
    Applying Proposition~\ref{prop: llt lemma 2}, we can find a dominating set $Y$ of $\G$ such that $\mu(Y)<\gamma_\mu+\eps$ and
    \[\mu(\st{x\in Y : W_x\ne \empt})>(1-\eps)\mu(Y),\] where
    \[W_x=\st{
        w\in Y^c : \calN(w)\cap Y = \{x\}
    }.\]
    Let $Y': = \st{x\in Y : W_x\ne \empt}$ be the set consisting of vertices that have sole neighbors, and $W=\bigcup_{x\in Y'}W_x$ consist of the sole neighbors.
    Then, let $F\colon X\times X\to \{0,1\}$ be the Borel transport defined by
    \[F(x,y)=\begin{cases}
        1&\text{if }x\in Y'\text{ and } y\in W_x\\
        0&\text{otherwise}
    \end{cases}.
    \] 
    Note that, for any $x \in Y'$, $\out_F(x) = \sum_{y \in W_x} F(x, y) = \vert W_x \vert$, and for any $x \in W$, $\ins_F(x) = \sum_{y \in Y'} F(y, x) = 1$. Hence
    \[\out_F(x) =\begin{cases}
        \vert W_x \vert &\text{if }x\in Y',\\
        0&\text{otherwise},
    \end{cases}
    \]
    and
    \[\ins_F(x)=\begin{cases}
        1&\text{if }x\in W,\\
        0&\text{otherwise}.
    \end{cases}
    \]
    By the mass transport principle~\eqref{eq:masstransport}, we thus obtain
    \begin{align*}
        \mu(W) & = \int_W 1\dmu(x) = \int_X \ins_F(x)\dmu(x) \\
        & = \int_X \out_F(x) \dmu(x) = \int_{Y'} \vert W_x \vert \dmu(x) \\
        & \geq \mu(Y') > (1 - \eps)\mu(Y).
    \end{align*}
    Letting $E_Y=E(\G)\cap (Y\times Y^c)$ be the set of boundary edges of $Y$, we can now compute 
    \begin{align*}\nu(E_Y)
    & = \int_{W} \deg_Y(x) \dmu(x)
     + \int_{Y^c\backslash W} \deg_Y(x) \dmu(x)\\
    & \le \int_{W} 1 \dmu(x)
     + \int_{Y^c\backslash W} \Delta \dmu(x)\\
     & = \mu(W)
     + \Delta(1-\mu(Y)-\mu(W))\\
     & = \Delta(1-\mu(Y))
     - (\Delta-1)(\mu(W))\\
     &<\Delta(1-\mu(Y))
     - (\Delta-1)(1-\eps) \mu(Y).
    \end{align*}
    Then, by Lemma~\ref{lem: edge density representor}, there exists some $f\in L^2_0(X)$ such that
     \[\frac{\ang{L_\G f, f}} {\norm{f}^2}\
        = \frac {\nu(E_Y)}{\mu(Y)(1-\mu(Y))}.\]
    This implies
    \[m_0(L_\G)
    <
    \frac
    {\Delta(1-\mu(Y)) - (\Delta-1)(1-\eps) \mu(Y)}
     {\mu(Y)(1-\mu(Y))}.
    \] 
    Then, taking the limit as $\eps\to 0$, 
    \[m_0(L_\G)
    \le
    \frac
    {\Delta(1-\gamma_\mu) - (\Delta-1) \gamma_\mu}
     {\gamma_\mu(1-\gamma_\mu)}
    =
    \frac
    {\Delta(1-2\gamma_\mu) + \gamma_\mu}
     {\gamma_\mu(1-\gamma_\mu)},
    \] 
    as desired.
\end{proof}

\bibliographystyle{amsalpha}
\bibliography{paper}

\end{document}